\title{Segregating Markov chains}
\author{Timo Hirscher
	\thanks{Research supported by grants from the Swedish Research Council and the Royal Swedish Academy of Sciences}
\qquad Anders Martinsson
     \thanks{Research supported by grants from the Swedish Research Council}
\\\normalsize Chalmers University of Technology and University of Gothenburg}
\theoremstyle{break}
\newtheorem{theorem}{Theorem}[section]
\newtheorem{lemma}{Lemma}[section]
\newtheorem{proposition}{Proposition}[section]
\newtheorem{definition}{Definition}
\newtheorem{remark}{Remark}
\newtheorem{example}{Example}[section]
\let\c@proposition\c@theorem
\let\c@lemma\c@theorem
\let\c@corollary\c@theorem
\newenvironment{proof}{\noindent{\sc Proof:}}{\vspace{-0.5cm}~\hfill $\square$\vspace{0.5cm}}
\newcommand\N{\mathbb{N}}
\newcommand\Prob{\mathbb{P}}
\renewcommand\epsilon{\varepsilon}
\renewcommand\phi{\varphi}
\newcommand{\abs}[1]{\ensuremath{\left| #1 \right|}}
\definecolor{darkblue}{rgb}{0,0,.5}
\let\oldvec\vec
\begin{document}
\newpage
\maketitle
\begin{abstract}
Dealing with finite Markov chains in discrete time, the focus often lies
on convergence behavior and one tries to make different copies of the chain
meet as fast as possible and then stick together. There are, however, discrete finite
(reducible) Markov chains, for which two copies started in different states can be coupled
to meet almost surely in finite time, yet their distributions keep a total variation
distance bounded away from 0, even in the limit as time tends to infinity. We show that
the supremum of total variation distance kept in this context is $\tfrac12$.
\end{abstract}



\section{Introduction}

When the long time behavior of Markov chains is analyzed, one of the most common strategies is to couple
several copies of the chain started in different states. In doing so, one standard approach is to define
two copies of a Markov chain (started in different states) on a common probability space, correlated in
such a way that they are likely to meet within some moderate time, and glue them together
as soon as this happens.

This idea is so predominant that little attention was directed away from such couplings; in the
standard reference \cite{Mixing} it was even claimed erroneously that {\em any} coupling of two Markov
chains with the same transition probabilities can be modified so that the two chains stay together at
all times after their first simultaneous visit to a single state. A counterexample to this statement
was in fact given in \cite{faithful}: If a coupling of two copies of the same Markov chain is changed
in such a way that the second copy mimicks the behavior of the first one once they meet, the altered 
individual process might no longer be a copy of the given chain.

For the sake of simplicity, we want to restrict our considerations to time-homogeneous Markov chains
evolving in discrete time and on countable state spaces -- except for Remark \ref{general} and Theorem
\ref{generalthm}, where we discuss how the argument used to derive Theorem \ref{upperboundthm} applies to more general settings as well.
So let ${\mathbf X}=(X_n)_{n\in\N_0}$ denote a Markov chain on a countable state space $S$ with
transition probabilities $\{P(r,s)=\Prob(X_{n+1}=s\,|\,X_n=r);\ r,s\in S,\ n\in\N_0\}$. While $\mathcal{L}(X_n)$ will be used as shorthand notation for the distribution of $X_n$ in general, we will denote the distribution of $X_n$ given $X_0=x$, i.e.\ for a copy of the chain started in $x\in S$, by $P^n(x,.)$.

In what follows, we want to describe and investigate the kind of Markov chain, that was first introduced
and analyzed by H\"aggstr\"om \cite{disagreement}: A chain in which two copies started in different states
can be coupled such that they almost surely meet, but their distributions do not come arbitrarily close to
one another with respect to total variation distance (cf.\ Definition \ref{TVdef}). This phenomenon -- that
is somewhat counterintuitive in the light of the usual coupling constructions -- will be referred to as
{\em segregation} of two states. Further, we consider the constant
\begin{equation}\label{kappa}
\kappa:=\sup\lim_{n\to\infty}\lVert P^n(x,.)-P^n(y,.)\rVert_{\mathrm{TV}},
\end{equation}
where the supremum is taken over finite Markov chain transition matrices $P$ and states $x$ and $y$,
such that two copies of the chain corresponding to $P$, one started in $x$ and the other in $y$, can be
coupled to meet a.s.\ in finite time. To put it briefly, the main result of this paper is
that $\kappa$ equals $\tfrac12$.

As a preparation, the second section deals with the concept of couplings in general and convergence of
Markov chains. Much of this is standard, but there is also the lesser known but crucial distinction
between Markovian and faithful couplings. Section \ref{concept} presents H\"aggstr\"om's result 
($\kappa\geq3-2\sqrt{2}$) and puts the idea of segregating Markov chains into a broader context.

In Section \ref{upperbound}, more precisely in Theorem \ref{upperboundthm}, we prove that the value
$\tfrac12$ is an upper bound on $\kappa$.

In Section \ref{exampl}, a constructive and intuitively accessible example of a Markov chain is given,
that segregates two states such that the total variation distance kept can be pushed arbitrarily
close to $\tfrac{1}{\mathrm{e}}$. This improves on the example in \cite{disagreement} and serves as a
warmup for the more technical and implicit construction in the last section.

Finally, in Section \ref{ctg} we introduce and employ the idea of {\em separation} to show that for any
$\epsilon>0$, there exist Markov chains segregating two states $x$ and $y$ such that copies started in
these states can be coupled to meet almost surely while their distributions $P^n(x,.)$ and
$P^n(y,.)$ have a total variation distance of at least $\tfrac12-\epsilon$ for all $n\in\N$, see
Theorem \ref{lowerboundthm}. Together with the upper bound from Section \ref{upperbound}, this
establishes our main result, Theorem \ref{1/2}, stating that $\kappa=\tfrac12$.

\section{Preliminaries: convergence and couplings}

In order to quantify the difference between two probability measures (such as the distributions of
two copies of a Markov chain at a fixed time) there are quite a few distance measures. The so-called
total variation distance is among the most common ones.

\begin{definition}\label{TVdef}
	Let $\mu$ and $\nu$ be two probability distributions on a countable set $S$. The {\em total variation distance}
	between the two measures is then defined as
	$$\lVert\mu-\nu\rVert_{\mathrm{TV}}:=\sup_{A\subseteq S}|\mu(A)-\nu(A)|.$$
\end{definition}

This notion of distance is used in most of the standard convergence theorems on finite Markov chains as well
(e.g.\ see Thm.\ 4.9 in \cite{Mixing}):
\begin{theorem}\label{conv}
	Suppose $(X_n)_{n\in\N_0}$ is an irreducible and aperiodic Markov chain on a finite state space $S$.
	Then there exists a unique limiting distribution $\pi$ on $S$, called the stationary distribution,
	as well as constants $\alpha\in(0,1)$ and $C>0$ such that
	$$\lVert P^n(x,.)-\pi\rVert_{\mathrm{TV}}\leq C\,\alpha^n,$$ for all $x\in S,\ n>0$.
\end{theorem}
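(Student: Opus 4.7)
The plan is to combine two standard ingredients: (i) existence and uniqueness of a stationary distribution on a finite state space, and (ii) a Doeblin-type coupling argument that forces exponential decay in total variation.

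First, I would establish the structural fact underlying everything else. Since $(X_n)$ is irreducible and aperiodic on a finite set $S$, a classical number-theoretic argument (greatest common divisor of return times equals one, together with finiteness of $S$) yields an integer $N \geq 1$ such that $P^N(r,s) > 0$ for every pair $r,s \in S$. Setting $\delta := |S| \cdot \min_{r,s \in S} P^N(r,s) \in (0,1]$ gives the key quantitative input. Existence of a stationary distribution $\pi$ follows either from a fixed-point argument applied to the compact simplex of probability measures on $S$ under the continuous map $\mu \mapsto \mu P$, or more directly from the Perron--Frobenius theorem applied to $P$; uniqueness is a consequence of irreducibility.

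Next, to prove the exponential bound, I would construct a coupling $(X_n, Y_n)_{n \geq 0}$ of two copies of the chain, one started at an arbitrary $x \in S$ and the other from $\pi$. On blocks of length $N$, I would use the Doeblin decomposition: for any current pair of states $(r, s)$, one can write $P^N(r,\cdot)$ and $P^N(s,\cdot)$ each as $\delta \nu + (1-\delta)\mu_{r}$ and $\delta \nu + (1-\delta)\mu_{s}$, respectively, where $\nu$ is the uniform distribution on $S$ (using the lower bound $P^N(r,s) \geq \delta/|S|$) and $\mu_r, \mu_s$ are probability measures absorbing the residual mass. Thus in each block of $N$ steps the two chains can be coupled to coincide with probability at least $\delta$, independently across blocks. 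Letting $\tau$ denote the first time the chains coincide, iterating over $k$ blocks gives $\Prob(\tau > kN) \leq (1-\delta)^k$, so $\tau$ has a geometric tail.

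Finally, I would invoke the standard coupling inequality: since $Y_n$ has distribution $\pi$ for every $n$ by construction, and the chains agree for all $n \geq \tau$ once identified (this can be arranged since the marginal of $X_n$ is unaffected by postponing the coalescence bookkeeping, so there is no conflict with the faithfulness issue raised in Section~1),
\[
\lVert P^n(x,\cdot) - \pi \rVert_{\mathrm{TV}} \leq \Prob(X_n \neq Y_n) \leq \Prob(\tau > n) \leq (1-\delta)^{\lfloor n/N\rfloor},
\]
which is of the desired form with $\alpha := (1-\delta)^{1/N} \in (0,1)$ and some $C > 0$ absorbing the rounding. The main obstacle, conceptually, is not any single calculation but handling the pair (existence of $\pi$, convergence rate) cleanly; I would emphasize the Doeblin minorization $P^N \geq \delta\nu$ since it simultaneously yields uniqueness of $\pi$ (any two stationary distributions are both fixed by the contraction $P^N$) and the exponential rate, thereby packaging both halves of the theorem into one argument.
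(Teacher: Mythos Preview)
The paper does not supply its own proof of this theorem; it is quoted as a preliminary result with a pointer to Thm.\ 4.9 in \cite{Mixing}. Your Doeblin minorization / block-coupling argument is correct and is essentially the standard proof found in that reference, so there is nothing to compare.
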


If the distribution of a Markov chain at time $n$ converges to the same distribution $\pi$ as $n$ tends to 
infinity, irrespectively of its starting distribution, a standard way to measure the speed of convergence
is the variation distance
$$d(n):=\sup_{x\in S}\,\lVert P^n(x,.)-\pi\rVert_{\mathrm{TV}}.$$
Sometimes it is more convenient, however, to consider the related function
$$\overline{d}(n):=\sup_{x,y\in S}\,\lVert P^n(x,.)-P^n(y,.)\rVert_{\mathrm{TV}}.$$
Both these functions, $d$ and  $\overline{d}$, are non-increasing in $n$ and $\overline{d}$ is in addition
submultiplicative, i.e.\ $\overline{d}(m+n)\leq\overline{d}(m)\cdot\overline{d}(n)$. Submultiplicativity need
not hold for $d$, but can be verified for $2\,d$ instead. Furthermore, it holds that
$d(n)\leq \overline{d}(n)\leq 2\,d(n)$.
For proofs of the elementary facts just stated, we refer to Lemma 2.20 in \cite{Aldous}.
Note that there $S$ is assumed to be finite, but the arguments immediately transfer to countable $S$.

On the basis of the notion of distance $d$, the central concept of mixing time is defined, loosely speaking, as
the time it takes until the effect of the starting distribution has begun to disappear substantially.

\begin{definition}
	Given a Markov chain, for which the distribution of $X_n$ converges to a fixed distribution $\pi$ (irrespectively
	of the distribution of $X_0$), define its {\em mixing time} by
	$$t_\mathrm{mix}:=\min\{n\in\N_0;\ d(n)\leq\tfrac14\}.$$
\end{definition}

As already mentioned, the tool that often makes proofs about convergence of Markov chains both
short and elegant is the coupling approach. Let us therefore properly define this standard concept and
then highlight which additional properties a coupling can have.

\begin{definition}
	We define a {\em coupling} of two copies of a Markov chain on $S$ to be a process
	$((X_n,Y_n))_{n\in\N_0}$ on $S\times S$, with the property that both $(X_n)_{n\in\N_0}$ and
	$(Y_n)_{n\in\N_0}$ are Markov chains on $S$ with the same transition probabilities (but possibly
	different starting distributions).
	
	If the process $((X_n,Y_n))_{n\in\N_0}$ is itself a Markov chain (not necessarily time-homogeneous), it is called a {\em Markovian coupling}.
\end{definition}

In order to get good estimates on mixing times it is often of importance to bring into line the long term
behavior of the chain started in different states. In order to do so, one wants to make sure that the two
coupled chains stay together once they meet, more precisely: if $X_m=Y_m,$ then $X_n=Y_n,$ for all $n\geq m$.
Couplings with this property are sometimes called ``{\em sticky}'' couplings. As noted in the
introduction, it is however not possible to modify every coupling in such a way that it becomes sticky by
simply glueing together the two copies once they meet, see Prop.\ 3 in \cite{faithful} for an example.
The crucial property is the following:

\begin{definition}\label{faith}
	A Markovian coupling $((X_n,Y_n))_{n\in\N_0}$ of two copies of a Markov chain is called {\em faithful} if
	for all $x_n,y_n,x_{n+1},y_{n+1}\in S$, $n\in\N_0$:
	\begin{equation*}
	\begin{split}
	\Prob\big(X_{n+1}=x_{n+1}\,|\,(X_n,Y_n)=(x_n,y_n)\big)&=\Prob(X_{n+1}=x_{n+1}\,|\,X_n=x_n)\\
	&=P(x_n,x_{n+1})
	\end{split}
	\end{equation*}
	and
	\begin{equation*}
	\begin{split}
	\Prob\big(Y_{n+1}=y_{n+1}\,|\,(X_n,Y_n)=(x_n,y_n)\big)&=\Prob(Y_{n+1}=y_{n+1}\,|\,Y_n=y_n)\\
	&=P(y_n,y_{n+1}).\end{split}
	\end{equation*}
\end{definition}

It should be mentioned that the term ``Markovian coupling'' is used in \cite{Mixing} to describe what we just
defined as faithful coupling. However, since we actually want to focus on couplings that are not faithful
(but may still be Markov chains -- as both the example in Section \ref{exampl} and the one in
\cite{disagreement} are), we want to make this distinction by adopting the definitions in \cite{faithful} and 
deviate from the notions in \cite{Mixing}.

In order to understand what makes faithful couplings special, note that in general a coupling of two copies
${\mathbf X}$ and ${\mathbf Y}$ of a Markov chain with transition probabilities $\{P(r,s);\ r,s\in S\}$ fulfills
\begin{equation*}
\begin{split}
\sum_{y_n\in S}\Prob\big(X_{n+1}=x_{n+1}\,|\,(X_n,Y_n)=(x_n,y_n)\big)\cdot\Prob(Y_n=y_n\,|\,X_n=x_n)\\
=P(x_n,x_{n+1}),
\end{split}
\end{equation*}
for all $x_n,x_{n+1}\in S$, $n\in\N_0$, and likewise
\begin{equation*}
\begin{split}
\sum_{x_n\in S}\Prob\big(Y_{n+1}=y_{n+1}\,|\,(X_n,Y_n)=(x_n,y_n)\big)\cdot\Prob(X_n=x_n\,|\,Y_n=y_n)\\
=P(y_n,y_{n+1}),
\end{split}
\end{equation*}
for all $y_n,y_{n+1}\in S$, $n\in\N_0$. So the extra condition on a faithful coupling amounts to $\Prob\big(X_{n+1}=x_{n+1}\,|\,(X_n,Y_n)=(x_n,y_n)\big)$
being constant in $y_n$ and $\Prob\big(Y_{n+1}=y_{n+1}\,|\,(X_n,Y_n)=(x_n,y_n)\big)$ being constant in $x_n$.
\vspace*{1em}

It is immediate to check that any faithful coupling can indeed be transformed into a sticky coupling by just
letting the chains run according to the given coupling until they meet and then run them together as
two {\em identical} copies of the same chain, without affecting the marginals. Exploiting this fact leads to
the estimate
\begin{equation}\label{couplineq}
\lVert P^n(x,.)-P^n(y,.)\rVert_{\mathrm{TV}}\leq \Prob(\tau>n)=1-\Prob(\tau\leq n)
\end{equation}
for any faithful coupling of two copies, ${\mathbf X}$ started in $x$ and ${\mathbf Y}$ started in $y$, where
$$\tau:=\inf\{n\geq0;\ X_n=Y_n\}$$
is the {\em first meeting time} of the coupled chains (cf.\ Thm.\ 1 in \cite{faithful}).

\section{Chains that meet and separate}\label{concept}

If two copies of a Markov chain are coupled, but the coupling is not sticky, clearly they can meet in one
state and separate afterwards. As mentioned above, if the coupling is not faithful (i.e.\ violates the
conditions given in Definition \ref{faith}), in some cases it cannot be transformed into a sticky coupling
by simply letting the two copies coalesce once they meet. As a byproduct, H\"aggstr\"om \cite{disagreement}
observed an even stronger form of incompatibility of two coupled copies of a chain that meet. He gives an
example of a finite reducible Markov chain with the following property: Two copies of the chain, started in
different states $x$ and $y$, can be coupled in such a way that they meet almost surely in finite time,
while the total variation distance of their distributions never drops below a fixed positive value. More
precisely, he shows (see Prop.\ 4.1 in \cite{disagreement}):

\begin{proposition}\label{segregate}
	There exists a finite state Markov chain such that for two of its states $x$ and $y$ we have that
	$$\lim_{n\to\infty}\lVert P^n(x,.)-P^n(y,.)\rVert_{\mathrm{TV}}>0,$$
	while on the other hand there exists a Markovian coupling of the chains ${\mathbf X}=(X_n)_{n\in\N_0}$ and
	${\mathbf Y}=(Y_n)_{n\in\N_0}$,
	starting at $X_0=x$ and $Y_0=y$, with the property that their first meeting time 
	$\tau=\inf\{n\geq0;\ X_n=Y_n\}$ is finite with probability $1$.
\end{proposition}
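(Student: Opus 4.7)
The plan is to exhibit a concrete finite-state reducible Markov chain together with a Markovian coupling witnessing both properties. A preliminary observation: the coupling cannot be \emph{faithful} in the sense of Definition~\ref{faith}, for otherwise~(\ref{couplineq}) would force $\lVert P^n(x,\cdot)-P^n(y,\cdot)\rVert_{\mathrm{TV}}\to 0$ whenever $\tau<\infty$ almost surely. So the construction must genuinely use joint-state information in defining its transitions.

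For the chain, I would take a reducible structure on a small state space with two disjoint recurrent classes (or absorbing states) $A$ and $B$ together with transient states $x,y$, and further transient states forming a ``meeting region''. The transitions would be tuned symmetrically so that a copy started at $x$ ends up absorbed in $A$ with probability $\alpha>\tfrac12$ (and in $B$ with probability $1-\alpha$), and conversely when started at $y$. This gives $\lim_n\lVert P^n(x,\cdot)-P^n(y,\cdot)\rVert_{\mathrm{TV}}=2\alpha-1>0$ and handles the first half of the statement. The meeting region should contain a state $m$ accessible from both the $x$- and $y$-side, at which the two coupled copies can be arranged to coincide.

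The Markovian coupling on $S\times S$ is then built so as to satisfy two properties: (a) both marginal processes follow the prescribed transition law $P$, and (b) every sample path of the joint chain started at $(x,y)$ that eventually hits an off-diagonal absorbing pair like $(A,B)$ must first visit the diagonal. Property~(b) is what delivers $\Prob(\tau<\infty)=1$, and it is attainable precisely because the coupling is not constrained to be faithful: joint transitions can be chosen so that any mass heading for an off-diagonal absorbing pair is first routed through a diagonal state such as $(m,m)$. The non-faithful freedom is used to pair $X$'s ``pull towards $A$'' with $Y$'s ``pull towards $m$'' (and conversely), rather than with $Y$'s pull towards $B$.

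The main difficulty is to set up the transient part of the chain richly enough that the required pre-routing through the diagonal can be performed while still respecting the marginal constraint that a certain minimum mass must end at off-diagonal absorbing pairs. I expect this to be the crux of the construction: it amounts to a delicate reachability arrangement on the product state space $S\times S$, ensuring that under the chosen coupling there are no infinite sample paths of the joint chain that stay off the diagonal. Once such a design is fixed, verifying $\Prob(\tau<\infty)=1$ reduces to a first-step analysis on the joint chain, and computing $\alpha$ to certify positive limiting total variation distance is standard.
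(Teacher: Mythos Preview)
Your outline matches the paper's approach: a reducible chain with two absorbing states reached with swapped probabilities from $x$ and $y$, plus intermediate states where a non-faithful Markovian coupling can force a meeting before absorption. The paper does not prove this from scratch but cites H\"aggstr\"om's concrete six-state example (Figure~\ref{Ollesex}), in which the coupling guarantees $\tau\leq 2$ almost surely for $p\in[1-\tfrac{1}{2}\sqrt{2},\tfrac{1}{2}\sqrt{2}]$; so the ``delicate reachability arrangement'' you anticipate as the crux is in fact achievable with only two intermediate states and two time steps.
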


Note that for any Markov chain and any two states $x$ and $y$, the sequence
$(\lVert P^n(x,.)-P^n(y,.)\rVert_{\mathrm{TV}})_{n\in\N_0}$ is non-increasing. This, together with the fact
that the total variation distance is always non-negative, guarantees the existence of $\lim_{n\to\infty}\lVert P^n(x,.)-P^n(y,.)\rVert_{\mathrm{TV}}$.

In fact, the reducible Markov chain in the example given in \cite{disagreement} comprises only $6$ states
(see Figure \ref{Ollesex} below). For $p\in[1-\frac{1}{2}\sqrt{2},\frac{1}{2}\sqrt{2}]$, two copies started
in $x$ and $y$ can be coupled such that their first meeting time is a.s.\ less than or equal to $2$ (for the
explicit calculations, see Prop.\ 4.1 in \cite{disagreement}). The copies
will reach one of the two absorbing states ($a$ and $b$) after two steps and the probability that the chain
started in $x$ lands in $a$ is $1-2p\,(1-p)$, in $b$ accordingly $2p\,(1-p)$. By symmetry, for the chain
started in $y$ it is precisely reversed.

\begin{figure}[H]
	\centering
	\includegraphics[scale=1]{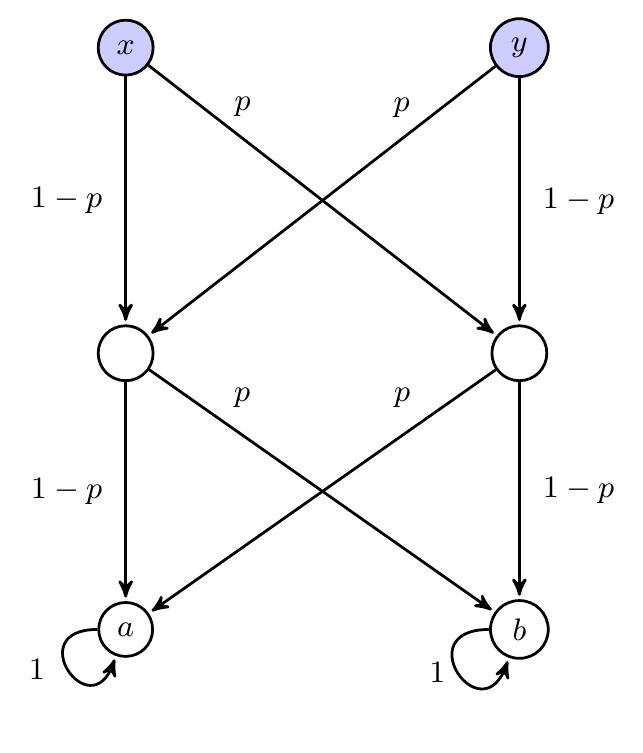}
	\caption{A first example of a segregating Markov chain -- provided $p\in[1-\frac{1}{2}\sqrt{2},\frac{1}{2}\sqrt{2}]\setminus\{\frac12\}$.}\label{Ollesex}
\end{figure}

So for $n\geq2$, $P^n(x,.)$ and $P^n(y,.)$ are unchanging and different if $p\neq\frac12$.
Choosing $p=\frac{1}{2}\sqrt{2}$ (or $p=1-\frac{1}{2}\sqrt{2}$) maximizes their total variation
distance at $3-2\sqrt{2}\approx 0.17153$.

As mentioned in the introduction, we will call Markov chains that have the property described in Proposition
\ref{segregate} to be {\em segregating the states $x$ and $y$}.
From the convergence theorem we know that such things cannot happen for {\em irreducible} finite Markov chains
(not even periodic ones).
%
In addition to that, even if the chain is either reducible or infinite, a coupling that lets two copies
started in different states meet almost surely, while the total variation distance of their distributions
stays bounded away from 0 for all time, cannot be faithful due to the coupling inequality \eqref{couplineq}.

\section{The upper bound}\label{upperbound}

From the previous section we know that there exist finite reducible Markov chains that 
{\em segregate two states $x$ and $y$}.
A natural question in this respect is how large a total variation distance between $P^n(x,.)$ and $P^n(y,.)$
can be kept, under the condition that two copies started in $x$ and $y$ respectively can be coupled to meet in
finite time almost surely -- in other words, the value of $\kappa$ as defined in \eqref{kappa}. The example in
\cite{disagreement} shows $\kappa \geq 3-2\sqrt{2}$; the following theorem establishes $\tfrac12$ as an upper
bound.

\begin{theorem}\label{upperboundthm}
	Consider a Markov chain on the countable state space $S$ and two fixed states $x$ and $y$. Further, we
	denote by ${\mathbf X}=(X_n)_{n\in\N_0}$ and ${\mathbf Y}=(Y_n)_{n\in\N_0}$ two coupled copies of the
	chain, started in $x$ and $y$ respectively,
	and their first meeting time by $\tau$. If ${\mathbf X}$ and ${\mathbf Y}$ can be coupled in such a way that $\Prob(\tau<\infty)=1$,
	it holds that
	$$\lim_{n\to\infty}\lVert P^n(x,.)-P^n(y,.)\rVert_{\mathrm{TV}}\leq \tfrac12.$$
\end{theorem}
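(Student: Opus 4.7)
The plan is to argue by contradiction. Assume that $d := \lim_{n\to\infty}\lVert P^n(x,.) - P^n(y,.)\rVert_{\mathrm{TV}} > \tfrac12$, and derive an inconsistency with the existence of a coupling $(X,Y)$ for which $\Prob(\tau < \infty) = 1$. Since $d_n := \lVert P^n(x,.) - P^n(y,.)\rVert_{\mathrm{TV}}$ is non-increasing, there is some $\delta > 0$ with $d_n > \tfrac12 + \delta$ for every $n$. For each such $n$, pick a witness set $A_n \subseteq S$ realizing $P^n(x, A_n) - P^n(y, A_n) > \tfrac12 + \delta$.

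The next step is to bring the coupling into play. Fix $n$ so large that $\Prob(\tau > n) < \delta/4$, and decompose
\[
P^n(x, A_n) - P^n(y, A_n) = \Prob(X_n \in A_n) - \Prob(Y_n \in A_n),
\]
splitting each probability according to $\{\tau \leq n\}$ versus $\{\tau > n\}$. The tail $\{\tau > n\}$ contributes an error bounded in absolute value by $\delta/4$. On $\{\tau \leq n\}$ the chains have met at some time $m$ in a common state $w$; taking, without loss of generality, a Markovian coupling and conditioning on $(\tau, X_\tau)$, the main contribution takes the form
\[
\sum_{m \leq n,\, w \in S} \Prob(\tau = m, X_\tau = w)\bigl[\mu_{n-m}^w(A_n) - \nu_{n-m}^w(A_n)\bigr],
\]
where $\mu_k^w$ and $\nu_k^w$ denote the conditional distributions of $X_{m+k}$ and $Y_{m+k}$ given the meeting event, i.e.\ the two coordinate marginals of the joint chain started at $(w,w)$.

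The heart of the proof, and what I expect to be the principal obstacle, is showing that this weighted sum is at most $\tfrac12$. For a faithful coupling the bound is trivial, since one may glue the chains after $\tau$ so that $\mu_k^w = \nu_k^w$ and the contribution vanishes outright. The genuine difficulty is that segregation, by the results of Section \ref{concept}, requires a non-faithful coupling in which $\mu_k^w$ and $\nu_k^w$ can truly differ. The key lever must be the global marginal constraint: since $(X_n)$ and $(Y_n)$ are marginally Markov chains with the same transition matrix $P$, the joint kernel from $(w,w)$ (and from every off-diagonal state reachable thereafter) has to average out, over joint histories, to $P(w,\cdot)$ on each coordinate. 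I would aim to turn this consistency into a soft coupling/marginal inequality --- matching the generality promised in Theorem \ref{generalthm} --- showing that any attempt to push the sum above $\tfrac12$ on a set $A_n$ forces an inconsistency with the marginal law of one of the two coordinates. Combining this key bound with the tail error then yields $d_n \leq \tfrac12 + \delta/4$ for all sufficiently large $n$, contradicting the assumption $d_n > \tfrac12 + \delta$, and hence $d \leq \tfrac12$.
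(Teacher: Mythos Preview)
Your decomposition is natural but, as written, circular. Once $\Prob(\tau>n)<\delta/4$, the ``main contribution'' on $\{\tau\leq n\}$ equals
\[
\Prob(X_n\in A_n,\ \tau\leq n)-\Prob(Y_n\in A_n,\ \tau\leq n)
=\bigl(P^n(x,A_n)-P^n(y,A_n)\bigr)+O(\delta),
\]
so asking that this weighted sum be at most $\tfrac12$ is (up to the negligible tail) exactly the inequality you are trying to prove. Conditioning on $(\tau,X_\tau)$ reorganizes the sum but does not create a new estimate: the post-meeting marginals $\mu_k^w,\nu_k^w$ are governed by the \emph{coupling} kernel from $(w,w)$, and for a non-faithful coupling there is no pointwise constraint of the form $\mu_k^w=\nu_k^w=P^k(w,\cdot)$. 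The only global constraint is that, after averaging over the full law of $(\tau,X_\tau)$ and the pre-meeting history, the coordinate marginals agree with $P^n(x,\cdot)$ and $P^n(y,\cdot)$ --- which again just reproduces the quantity you started from. Your hoped-for ``soft coupling/marginal inequality'' is therefore not an intermediate step but a restatement of the goal; no new lever has been introduced. (Incidentally, the clause ``taking, without loss of generality, a Markovian coupling'' is also unjustified: the hypothesis gives you \emph{some} coupling with $\Prob(\tau<\infty)=1$, not a Markovian one, and even a Markovian coupling need not be time-homogeneous, so $\mu_k^w$ may depend on $m$ as well.)

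The paper's argument avoids the post-meeting dynamics entirely. For fixed $n$ and $A\subseteq S$ it looks at the bounded martingales $M_t=P^{n-t}(X_t,A)$ and $N_t=P^{n-t}(Y_t,A)$. The crucial point is that $M_t$ is a function of the \emph{state} $X_t$ alone, not of the coupling; hence $X_t=Y_t$ forces $M_t=N_t$, regardless of faithfulness. Consequently the event $\{\tau\leq n\}$ is contained (up to a null set) in $\{M_t<\tfrac12\ \text{for some }t\}\cup\{N_t\geq\tfrac12\ \text{for some }t\}$, and optional stopping at the first crossing of level $\tfrac12$ gives
\[
P^n(x,A)-P^n(y,A)=\E M_0-\E N_0\leq 1-\tfrac12\,\Prob(\tau\leq n).
\]
Letting $n\to\infty$ yields the bound $\tfrac12$. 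The missing idea in your proposal is precisely this: to pass from the coupled pair to quantities that depend only on the individual state (and hence must coincide at $\tau$), rather than trying to control the coupling-dependent behaviour after $\tau$.
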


This result is an immediate implication of the following proposition:

\begin{proposition}\label{coupl}
	Consider ${\mathbf X}=(X_n)_{n\in\N_0}$, ${\mathbf Y}=(Y_n)_{n\in\N_0}$ and $\tau$ as above. Then it follows that
	$$\lVert P^n(x,.)-P^n(y,.)\rVert_{\mathrm{TV}}\leq 1-\tfrac12\cdot\Prob(\tau\leq n)\quad \text{for all }
	n\in\N_0.$$
\end{proposition}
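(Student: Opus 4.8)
The plan is to pass to the equivalent formulation in terms of overlap. Writing $\mu=P^n(x,.)$ and $\nu=P^n(y,.)$, the elementary identity $\lVert\mu-\nu\rVert_{\mathrm{TV}}=1-\sum_{s\in S}\min(\mu(s),\nu(s))$ turns the claim into the lower bound $\sum_{s\in S}\min(\mu(s),\nu(s))\ge\tfrac12\,\Prob(\tau\le n)$ on the total overlap of the two time-$n$ distributions. Using the dual description of the overlap,
$$\sum_{s\in S}\min(\mu(s),\nu(s))=\inf_{A\subseteq S}\big(\mu(S\setminus A)+\nu(A)\big),$$
it suffices to prove that for every set $A\subseteq S$ one has $\Prob(X_n\notin A)+\Prob(Y_n\in A)\ge\tfrac12\,\Prob(\tau\le n)$. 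I would take this dual inequality as the target, since it decouples the two copies and lets the meeting enter cleanly; note that no construction ``respecting'' the meeting/non\nobreakdash-meeting split can work, because preserving the marginals forces one to keep each copy's genuine meeting\nobreakdash-conditioned future, and these may be disjoint, so the overlap must genuinely be estimated across the split.

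Next I would localize at the meeting. On $\{\tau\le n\}$ the two copies occupy a common state $Z=X_\tau=Y_\tau$, and the events $\{\tau=k,\,X_k=z\}$ are disjoint and exhaust $\{\tau\le n\}$. Conditioning on such a meeting, the \emph{unconditioned} Markov property of each individual copy says that a fresh continuation of the chain from $z$ at time $k$ lands in $A$ with probability $P^{n-k}(z,A)$ and in $S\setminus A$ with the complementary probability. The guiding idea is that, routed through the common state $z$, any continuation ends up either inside $A$ or outside it: the term $\Prob(Y_n\in A)$ should account for the $A$\nobreakdash-portion of the meeting mass passing through $z$, and $\Prob(X_n\notin A)$ for the complementary portion, so that summing over all meeting times and states recovers the full meeting mass $\Prob(\tau\le n)$. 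The factor $\tfrac12$ is the price paid for not being able to charge \emph{both} portions to the same copy.

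The main obstacle is that $\tau$ is a stopping time for the joint chain but for neither marginal alone, so conditioning on a particular event $\{\tau=k,\,X_k=z\}$ biases the time\nobreakdash-$n$ laws $f_{k,z}$ of $X_n$ and $g_{k,z}$ of $Y_n$ away from the common fresh\nobreakdash-continuation law $P^{n-k}(z,\cdot)$; this is exactly the faithfulness obstruction of Section~\ref{concept}, and it is what forbids simply gluing the copies at the meeting. Over the disjoint first\nobreakdash-meeting events the meeting parts of the two target probabilities add without double counting, contributing $\sum_{k,z}\Prob(\tau=k,X_k=z)\big(1-f_{k,z}(A)+g_{k,z}(A)\big)$, and the non\nobreakdash-meeting parts contribute a further non\nobreakdash-negative amount that, through the same state $z$, is governed by the Markov property of each marginal. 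The technical heart is to show that, although the one\nobreakdash-sided discrepancy $f_{k,z}(A)-g_{k,z}(A)$ may be as large as $1$ for individual meeting points, its $\Prob(\tau=k,X_k=z)$\nobreakdash-weighted aggregate, once combined with the Markov\nobreakdash-dictated non\nobreakdash-meeting mass, cannot drive $\Prob(X_n\notin A)+\Prob(Y_n\in A)$ below $\tfrac12\,\Prob(\tau\le n)$. I expect this balancing between the biased meeting laws and the non\nobreakdash-meeting mass forced by the marginal Markov property to be the crux, and the constant $\tfrac12$ to be precisely what it forces. Finally, Theorem~\ref{upperboundthm} follows at once: for a coupling with $\Prob(\tau<\infty)=1$ one has $\Prob(\tau\le n)\to1$, so letting $n\to\infty$ yields the limiting total variation distance at most $\tfrac12$.
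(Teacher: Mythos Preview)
Your reformulation is sound: the target inequality $\Prob(X_n\notin A)+\Prob(Y_n\in A)\ge\tfrac12\,\Prob(\tau\le n)$ for every $A\subseteq S$ is exactly equivalent to the paper's $P^n(x,A)-P^n(y,A)\le 1-\tfrac12\,\Prob(\tau\le n)$. The gap is that your proposed mechanism for proving it---decomposing over the first\nobreakdash-meeting events $\{\tau=k,\,X_k=z\}$ and hoping that the biased conditional laws $f_{k,z},g_{k,z}$ will ``balance'' against the non\nobreakdash-meeting mass---is not an argument but a wish. You correctly diagnose that conditioning on $\tau$ (a stopping time for the \emph{joint} process, not for either marginal) destroys the individual Markov property, and that $f_{k,z}(A)-g_{k,z}(A)$ can be as large as $1$; what is missing is any device that turns this into the constant $\tfrac12$. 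Nothing in the decomposition you wrote down forces the aggregate to stay above $\tfrac12\,\Prob(\tau\le n)$, and I do not see how to complete it along those lines.

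The paper sidesteps the obstruction by \emph{never} conditioning on the meeting. For fixed $n$ and $A$, it introduces the prediction processes $M_t:=P^{n-t}(X_t,A)$ and $N_t:=P^{n-t}(Y_t,A)$, which are bounded martingales for the \emph{marginal} filtrations of $\mathbf X$ and $\mathbf Y$ respectively (this needs only the marginal Markov property, so faithfulness is irrelevant). The sole link to the coupling is the deterministic implication $X_t=Y_t\Rightarrow M_t=N_t$: hence on $\{\tau\le n\}$ the two martingales coincide at some time, so at least one of them must touch the ``wrong'' side of $\tfrac12$. Writing $B_x=\{M_t\ge\tfrac12\text{ for all }t\le n\}$ and $B_y=\{N_t<\tfrac12\text{ for all }t\le n\}$, this gives $\{\tau\le n\}\subseteq B_x^{\,\mathrm c}\cup B_y^{\,\mathrm c}$, and optional stopping at the first crossing times yields $P^n(x,A)\le 1-\tfrac12\,\Prob(B_x^{\,\mathrm c})$ and $P^n(y,A)\ge\tfrac12\,\Prob(B_y^{\,\mathrm c})$. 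Subtracting and using $\Prob(B_x^{\,\mathrm c})+\Prob(B_y^{\,\mathrm c})\ge\Prob(\tau\le n)$ finishes the proof. The point you are missing is precisely this level\nobreakdash-$\tfrac12$ martingale crossing, which replaces your intractable joint conditioning by two purely marginal optional\nobreakdash-stopping estimates.
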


\begin{proof}
	Fix $n\in \mathbb{N}_0$ and a subset $A \subseteq S$ by means of which we define the processes $(M_t)_{t=0}^n$ and $(N_t)_{t=0}^n$ given by
	$$M_t := \mathbb{P}\left( X_n \in A \,|\, X_t\right) = P^{n-t}(X_t, A),$$ and
	$$N_t := \mathbb{P}\left( Y_n \in A \,|\, Y_t\right) = P^{n-t}(Y_t, A).$$
	It is easily checked that these processes are martingales (with respect to the filtrations generated by
	$\mathbf{X}$ and $\mathbf{Y}$ respectively). Further let $B_x$ and $B_y$ denote the events that
	$M_t \geq \frac{1}{2}$ and $N_t < \frac{1}{2}$ respectively for all $0 \leq t \leq n$. As the event
	$B_x\cap B_y$ implies $M_t\neq N_t$ and with that $X_t\neq Y_t$ for all $0 \leq t \leq n$ (almost surely), it follows
	that $\{\tau\leq n\}$ is (up to a nullset) contained in the union of $B_x^{\,\text{c}}$ and $B_y^{\,\text{c}}$.
	
	Next, we define
	$$\tau_x := \inf\{0 \leq t \leq n;\ M_t < \tfrac{1}{2}\}\quad\text{and}\quad
	\tau_y := \inf\{0 \leq t \leq n;\ N_t \geq \tfrac{1}{2}\},$$
	where the infimum is understood to be $n$ if the corresponding set is empty. Note that $\tau_x$ and $\tau_y$ are stopping times for $M_t$ and $N_t$ respectively. Since $(M_t)_{t=0}^n$ and $(N_t)_{t=0}^n$ are bounded martingales,
	the Optional Stopping Theorem (see for example Cor.\ 17.7 in \cite{Mixing}) gives the estimates
	\begin{equation}\label{M_t} P^n(x, A) = \mathbb{E}\,M_0 = \mathbb{E}\,M_{\tau_x} \leq \tfrac{1}{2} \cdot \mathbb{P}(B_x^{\,\text{c}}) 
	+ 1 \cdot \mathbb{P}(B_x) = 1 - \tfrac{1}{2} \cdot \mathbb{P}(B_x^{\,\text{c}})\end{equation}
	$$ \text{and}\quad P^n(y, A) = \mathbb{E}\,N_0 = \mathbb{E}\,N_{\tau_y} \geq \tfrac{1}{2}\cdot\mathbb{P}(B_y^{\,\text{c}}).$$
	Combining these two inequalities, we get
	$$P^n(x, A) - P^n(y, A) \leq 1 - \tfrac{1}{2}\left( \mathbb{P}(B_x^{\,\text{c}}) + \mathbb{P}(B_y^{\,\text{c}})\right) \leq 1 - \tfrac{1}{2}\cdot \mathbb{P}(\tau\leq n).$$
	Finally, maximizing the left-hand side over all subsets $A\subseteq S$ yields
	$$\|P^n(x, \cdot) - P^n(y, \cdot)\|_{TV} \leq 1 - \tfrac{1}{2}\cdot\mathbb{P}(\tau\leq n),$$
	as claimed.
\end{proof}

\begin{remark}\label{general}
	Reading carefully through the proof of Proposition \ref{coupl}, one may notice that the martingale argument
	used essentially does not require our general assumptions of time-homogeneity and countable state space.
	For time-inhomogeneous chains, $M_t:=\mathbb{P}\left( X_n \in A \,|\, X_t\right)$, can no longer
	be written as $P^{n-t}(X_t, A)$ (likewise for $N_t$), but this does not impair the argument and we can again
	conclude $\lim_{n\to\infty}\lVert \mathcal{L}(X_n)-\mathcal{L}(Y_n)\rVert_{\mathrm{TV}}\leq \tfrac12$, where
	$\mathcal{L}(X_n)$ denotes the distribution of $X_n$. Given an	uncountable state space, the first meeting
	time $\tau$ is no longer measurable by default. If we add this as an extra condition, however, the above
	proof (with the minor modification that only measurable sets $A$ are considered) extends to this setting
	as well.
\end{remark}

To fully exhaust the range of validity of this argument, let us leave the default preconditions for
a moment and consider continuous-time Markov processes in full generality (as e.g.\ Kallenberg
\cite{Kallenberg} defines them). In this case, we have to add further technical assumptions to save
the line of reasoning and result of Proposition \ref{coupl}. For a measurable set $A\subseteq S$, time
horizon $T>0$ and
$0\leq t\leq T$, define (similar to the above)
\begin{equation}\label{martingale}
M_t:=\mathbb{P}\left( X_T \in A \,|\, X_t\right) \quad \text{and}\quad
N_t:=\mathbb{P}\left( Y_T \in A \,|\, Y_t\right).
\end{equation}

\begin{theorem}\label{generalthm}
	Consider a continuous-time Markov process (not necessarily time-ho\-mo\-ge\-neous) with general
	state space $S$. Let ${\mathbf X}=(X_t)_{t\geq0}$ and ${\mathbf Y}=(Y_t)_{t\geq0}$ denote two coupled
	copies of the process, that are started in fixed states $x$ and $y$ respectively, and let $\tau$ denote
	their first meeting time. Fix a time horizon $T>0$ and assume that $\{\tau\leq T\}$ is measurable.
	If for all measurable sets $A\subseteq S$ it is possible to choose versions of the martingales
	$(M_t)_{t\in[0,T]}$ and $(N_t)_{t\in[0,T]}$, as defined in \eqref{martingale}, that are a.s.\ continuous from the right, while having the
	property that for all $t\in[0,T]$, $X_t=Y_t$ implies $M_t=N_t$, it holds that
	$$\lVert \mathcal{L}(X_T)-\mathcal{L}(Y_T)\rVert_{\mathrm{TV}}\leq 1-\tfrac12\cdot\Prob(\tau\leq T).$$
\end{theorem}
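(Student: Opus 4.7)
The plan is to mirror the proof of Proposition \ref{coupl} piece by piece, adapting each step to the continuous-time setting. Fix a measurable $A\subseteq S$ and right-continuous versions $(M_t)_{t\in[0,T]}$, $(N_t)_{t\in[0,T]}$ with the stated compatibility $X_t=Y_t\Rightarrow M_t=N_t$, and pass to the usual augmentation (right-continuous and complete) of the filtrations generated by $\mathbf{X}$ and $\mathbf{Y}$, with respect to which $M$ and $N$ remain martingales. As in the discrete case I would introduce the events
$$B_x := \{M_t\geq\tfrac12\text{ for all }t\in[0,T]\},\qquad B_y := \{N_t<\tfrac12\text{ for all }t\in[0,T]\};$$
both are measurable because a right-continuous path on $[0,T]$ is determined by its values on $[0,T]\cap\mathbb{Q}$. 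On $B_x\cap B_y$ one has $M_t\neq N_t$ for every $t$, so the compatibility assumption forces $X_t\neq Y_t$ throughout $[0,T]$, and hence $\{\tau\leq T\}\subseteq B_x^{\,\text{c}}\cup B_y^{\,\text{c}}$ up to a null set.

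Next I would set
$$\tau_x := \inf\{t\in[0,T]: M_t<\tfrac12\}\wedge T,\qquad \tau_y := \inf\{t\in[0,T]: N_t\geq\tfrac12\}\wedge T,$$
and verify that these are stopping times: $\tau_x$ as the hitting time of an open set by a right-continuous adapted process, $\tau_y$ via the d\'ebut theorem under the usual hypotheses. Since $M$ and $N$ take values in $[0,1]$ and both stopping times are bounded by $T$, the Optional Stopping Theorem gives $\mathbb{E}\,M_{\tau_x}=\mathbb{E}\,M_0=\Prob(X_T\in A)$ and the analogous identity for $N_{\tau_y}$. Right-continuity then supplies the pathwise bounds that drive the argument: on $B_x^{\,\text{c}}$ one can select $t_n\downarrow\tau_x$ with $M_{t_n}<\tfrac12$, and right-continuity forces $M_{\tau_x}\leq\tfrac12$; symmetrically $N_{\tau_y}\geq\tfrac12$ on $B_y^{\,\text{c}}$.

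Combining these estimates exactly as in Proposition \ref{coupl} produces
$$\Prob(X_T\in A)-\Prob(Y_T\in A)\leq 1-\tfrac12\bigl(\Prob(B_x^{\,\text{c}})+\Prob(B_y^{\,\text{c}})\bigr)\leq 1-\tfrac12\cdot\Prob(\tau\leq T),$$
and taking the supremum over measurable $A\subseteq S$ finishes the argument. The only genuinely new obstacle compared with the discrete-time case is the measure-theoretic bookkeeping peculiar to continuous time: the measurability of $B_x$ and $B_y$, the stopping-time status of $\tau_x$ and $\tau_y$, and the correct behaviour of the right-continuous versions at these times. The right-continuity hypothesis in the theorem, together with the usual augmentation of the filtrations, is precisely what is needed to handle all three points; once they are in place, the rest is a faithful transcription of the earlier proof.
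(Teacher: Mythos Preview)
Your proposal is correct and follows essentially the same route as the paper's proof, which simply says to rerun the argument of Proposition~\ref{coupl} with $n$ replaced by $T$, invoking right-continuity to get $M_{\tau_x}\leq\tfrac12$ on $B_x^{\,\text{c}}$ and $N_{\tau_y}\geq\tfrac12$ on $B_y^{\,\text{c}}$, and the continuous-time Optional Stopping Theorem (Revuz--Yor). You have supplied more of the measure-theoretic detail (measurability of $B_x,B_y$, stopping-time status of $\tau_x,\tau_y$, augmentation of the filtrations) than the paper does, but the skeleton is identical.
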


\begin{proof}
	Again, we fix some measurable subset $A\subseteq S$ and define the martingales $(M_t)_{t\in[0,T]}$ and
	$(N_t)_{t\in[0,T]}$ as in \eqref{martingale}, with the two additional properties stated in the theorem.
	Following the proof of Proposition \ref{coupl} (literally, besides replacing $n$ by $T$) we can still
	conclude that $\{\tau\leq T\}\cap(B_x\cap B_y)$ is a nullset. Continuity from the right of $(M_t)_{t\in[0,T]}$
	and	$(N_t)_{t\in[0,T]}$ implies $M_{\tau_x}\leq\frac12$ on $B_x^{\,\text{c}}$ and $N_{\tau_y}\geq\frac12$ on $B_y^{\,\text{c}}$. Using the Optional Stopping Theorem for continuous-time martingales (see e.g.
	Thm.\ (3.2) in \cite{Revuz}) we can conclude
	just as above.
\end{proof}

\begin{remark}\label{general2}
	A simple way to ensure the assumed properties of $(M_t)_{t\in[0,T]}$ and
	$(N_t)_{t\in[0,T]}$ is to consider a topology on $S$ and to require two things: first, that the Markov process a.s.\ has right-continuous sample paths and second that the transition probabilities $\mathbb{P}(X_T\in A \,|\, X_t=x)$ are continuous in $t\in[0, T)$ and $x\in S$ for all measurable $A\subseteq S$.
	
	As an aside, it might be worth noting that the analogue of Proposition \ref{coupl} is not valid if we drop the additional assumptions completely, as it might be possible then to alter trajectories without changing
	transition probabilities. For instance, consider the process $(X_t)_{t\in[0,1]}$ on $S=\{0, 1\}$ defined by
	$$X_t=\begin{cases}X_0,& \text{for } t\neq \xi \\
	1-X_0,& \text{for } t=\xi,\end{cases}
	\quad\text{where } \xi \sim \mathrm{unif}(0,1).$$
	Two independent copies, started at $0$ and $1$ respectively and using independent copies of $\xi$, will almost surely meet, but the total variation distance of their distributions stays $1$ for all $t\in[0,1]$.
\end{remark}

Besides these generalizations, the statement from Proposition \ref{coupl} can also be used to get
upper bounds on mixing times -- similar to the usual approach, see for example Cor.\ 5.3 in \cite{Mixing}
-- replacing the coupling inequality \eqref{couplineq} as starting point. In doing so, we pay by an
additional factor $\tfrac12$ in front of $\Prob(\tau\leq n)$, but can in return employ any kind of coupling,
not only faithful ones.
It remains to be seen whether this will ever turn out useful in practice as basically all
standard coupling constructions are faithful. However, we want to mention at this point that
non-Markovian couplings actually already proved to be useful in applications, cf.\ \cite{Vigoda}
for instance.

\begin{proposition}\label{tmix}
	Consider a Markov chain $\mathbf{X}=(X_n)_{n\in\N_0}$ with the property that $\mathcal{L}(X_n)$ converges to a fixed distribution $\pi$ irrespectively of the starting distribution.
	Further, suppose that for some $\alpha\in(0,1]$ and each pair of states $x,y\in S$ there exists a (not
	necessarily faithful or even Markovian) coupling $((X_n,Y_n))_{n\in\N_0}$ of two copies of the
	chain started in $x$ and $y$ respectively, such that the first meeting time $\tau$ of the
	two coupled processes fulfills $\Prob(\tau\leq n)\geq \alpha$. Then
	$$t_\mathrm{mix}\leq n\cdot\left\lceil\frac{\log(\tfrac14)}{\log(1-\tfrac{\alpha}{2})}\right\rceil.$$
\end{proposition}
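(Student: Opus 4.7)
The plan is to combine Proposition \ref{coupl} with the standard submultiplicativity of $\overline{d}$ that was recorded in Section 2.

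First, I would apply Proposition \ref{coupl} pair-by-pair. For any states $x,y\in S$, the hypothesis furnishes a (possibly non-Markovian) coupling with $\Prob(\tau\leq n)\geq\alpha$, and Proposition \ref{coupl} yields
$$\lVert P^n(x,\cdot)-P^n(y,\cdot)\rVert_{\mathrm{TV}}\leq 1-\tfrac{\alpha}{2}.$$
Taking the supremum over $x,y\in S$ gives $\overline{d}(n)\leq 1-\tfrac{\alpha}{2}$. The only thing to check here is that Proposition \ref{coupl} really does not use faithfulness or even the Markov property of the joint process; inspection of its proof confirms this, since the martingales $(M_t)$ and $(N_t)$ are defined only from the marginals $\mathbf{X}$ and $\mathbf{Y}$.

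Next, I would invoke the submultiplicativity $\overline{d}(m+n)\leq\overline{d}(m)\cdot\overline{d}(n)$ recalled in Section 2 to conclude, by iteration, that
$$\overline{d}(k n)\leq\bigl(1-\tfrac{\alpha}{2}\bigr)^{k}\quad\text{for every }k\in\mathbb{N}.$$
Combined with the inequality $d(m)\leq\overline{d}(m)$ (also from Section 2), this gives $d(kn)\leq(1-\tfrac{\alpha}{2})^k$.

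Finally, I would solve for the smallest $k$ guaranteeing $d(kn)\leq \tfrac14$: the condition $(1-\tfrac{\alpha}{2})^k\leq\tfrac14$ is equivalent to $k\geq \log(\tfrac14)/\log(1-\tfrac{\alpha}{2})$ (the log of a number in $(0,1)$ is negative, so the inequality direction is preserved). Hence the mixing time is bounded by $n$ times $\lceil\log(\tfrac14)/\log(1-\tfrac{\alpha}{2})\rceil$, as asserted. No real obstacle is expected here; the whole argument is a direct substitution of the bound $1-\tfrac{\alpha}{2}$ from Proposition \ref{coupl} into the standard mixing-time machinery, with the only subtlety being the reminder that Proposition \ref{coupl} does not require the coupling to be faithful or Markovian, which is precisely what allows $\alpha$ to come from an arbitrary coupling.
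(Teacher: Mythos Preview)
Your proposal is correct and follows essentially the same route as the paper's proof: apply Proposition~\ref{coupl} to every pair $x,y$ to obtain $\overline{d}(n)\leq 1-\tfrac{\alpha}{2}$, then use submultiplicativity of $\overline{d}$ together with $d\leq\overline{d}$ to get $d(kn)\leq(1-\tfrac{\alpha}{2})^k$, and finally choose $k$ so that the right-hand side drops to $\tfrac14$. Your added remark that Proposition~\ref{coupl} requires no structural assumption on the coupling is exactly the point the paper highlights in the paragraph preceding the proposition.
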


\begin{proof}
	From Proposition \ref{coupl} we can conclude that 
	$$\overline{d}(n)=\sup_{x,y\in S}\,\lVert P^n(x,.)-P^n(y,.)\rVert_{\mathrm{TV}}\leq1-\tfrac{\alpha}{2}.$$
	Consequently, as $\overline{d}$ is submultiplicative and dominates $d$, we get for any $k\in\N:$
	$$d(kn)\leq\overline{d}(kn)\leq\overline{d}(n)^k\leq(1-\tfrac{\alpha}{2})^k.$$
	Choosing $k\geq\frac{\log\left(\tfrac14\right)}{\log\left(1-\tfrac{\alpha}{2}\right)}$, this estimate
	becomes
	$$d(kn)\leq(1-\tfrac{\alpha}{2})^{\log\left(\tfrac14\right)\big/\log\left(1-\tfrac{\alpha}{2}\right)}
	=\frac14.\vspace*{-1em}$$
\end{proof}

\section{A simple example that narrows the gap}\label{exampl}

In this section, we will present another finite state Markov chain that improves on the value of $3-2\sqrt{2}$
established by Häggström \cite{disagreement}. To begin with, let us prepare a lemma, which will come in useful
when the total variation distance in our example of a finite segregating Markov chain is to be assessed.

Consider a sequence of independent Bernoulli trials, each with success probability $p<1$. The distribution of
the number of successful attempts until $r$ failures have occurred is called the negative binomial distribution
with parameters $r$ and $p$ and commonly denoted by $\mathrm{NB}(r,p)$.

\begin{lemma}\label{TV}
	For $\mu:=\mathrm{NB}(1,p)$ and $\nu:=\mathrm{NB}(2,p)$, it holds that
	\begin{equation}\label{NB}
	\lVert \mu -\nu\rVert_{\mathrm{TV}}
	=\sum_{k=0}^{\big\lfloor\tfrac{p}{1-p}\big\rfloor} \big(\mu(k)-\nu(k)\big)
	=\big(\big\lfloor\tfrac{p}{1-p}\big\rfloor+1\big)
	\cdot(1-p)\cdot p^{\big\lfloor\tfrac{p}{1-p}\big\rfloor+1}
	\end{equation}
	for any $p\in[0,1)$. If $p=\tfrac{m}{m+1}$, where $m\in\N_0$, this value simplifies to $p^{\tfrac{1}{1-p}}$.
\end{lemma}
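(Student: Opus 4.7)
The plan is to identify the pmfs of the two negative binomials, pinpoint which side of the support each measure dominates on, reduce the total variation norm to a telescoping sum, and evaluate that sum in closed form (most cleanly via the underlying Bernoulli-trial interpretation).

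First I would record that $\mu(k)=(1-p)p^k$ for $k\in\N_0$ while $\nu(k)=(k+1)(1-p)^2p^k$, so that
$$\mu(k)-\nu(k)=(1-p)p^k\bigl[1-(k+1)(1-p)\bigr].$$
Thus $\mu(k)\ge\nu(k)$ precisely when $k+1\le\tfrac1{1-p}$, i.e.\ when $k\le\lfloor\tfrac{p}{1-p}\rfloor=:K$. Using the standard identity $\|\mu-\nu\|_{\mathrm{TV}}=\sum_{k:\,\mu(k)>\nu(k)}(\mu(k)-\nu(k))$ (which is immediate from Definition \ref{TVdef}, the supremum being attained on the set where $\mu$ dominates), this gives the first equality in \eqref{NB} for free.

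For the second equality I need $\mu(\{0,\dots,K\})-\nu(\{0,\dots,K\})$ in closed form. The quickest route is to interpret both cumulative probabilities in terms of the underlying i.i.d.\ Bernoulli$(p)$-sequence: $\mu(\{0,\dots,K\})$ is the probability that the first failure occurs within the first $K+1$ trials, namely $1-p^{K+1}$; likewise $\nu(\{0,\dots,K\})$ is the probability that the second failure occurs within the first $K+2$ trials, which by complementation equals
$$1-p^{K+2}-(K+2)(1-p)\,p^{K+1}.$$
Subtracting the first from the second and simplifying yields
$$\mu(\{0,\dots,K\})-\nu(\{0,\dots,K\})=(K+1)(1-p)\,p^{K+1},$$
which is the middle expression of \eqref{NB}. (The same identity can be obtained purely algebraically by differentiating $\sum_{k=0}^{K}p^{k+1}=(p-p^{K+2})/(1-p)$, but the probabilistic interpretation is tidier.)

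Finally, the special case $p=\tfrac{m}{m+1}$ is a short substitution: here $\tfrac{p}{1-p}=m$ is already an integer, so $K=m$ and $1-p=\tfrac1{m+1}$; the prefactor $(K+1)(1-p)=(m+1)\cdot\tfrac1{m+1}=1$ cancels, leaving $p^{m+1}=p^{1/(1-p)}$. The only step where one has to be slightly careful is verifying that the inequality $\mu(k)\ge\nu(k)$ flips exactly at $k=K+1$ (including the boundary behavior when $p/(1-p)$ itself is an integer, in which case $\mu(K+1)=\nu(K+1)$ and both conventions for the cutoff give the same sum); but this is handled by the explicit factorization above, so I do not anticipate any real obstacle beyond bookkeeping.
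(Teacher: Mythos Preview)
Your proof is correct and follows essentially the same route as the paper: identify the two pmfs, determine the cutoff $K=\lfloor p/(1-p)\rfloor$ where $\mu(k)\ge\nu(k)$, and compute $\sum_{k=0}^K(\mu(k)-\nu(k))$ in closed form. The only cosmetic difference is that the paper evaluates the partial sums directly via the finite geometric series and its derivative, whereas you read off the same expressions $1-p^{K+1}$ and $1-p^{K+2}-(K+2)(1-p)p^{K+1}$ from the Bernoulli-trial interpretation; the resulting algebra is identical.
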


\begin{proof}
	A standard calculation shows that for two probability distributions $\mu$ and $\nu$ on a discrete space $S$,
	their total variation distance can be calculated as
	$$\lVert\mu-\nu\rVert_{\mathrm{TV}}=\sum\limits_{\substack{x\in S\\
			\mu(x)\geq\nu(x)}}\big(\mu(x)-\nu(x)\big),$$
	see for example Remark 4.3 in \cite{Mixing}. For $\mu=\mathrm{NB}(1,p)$ and $\nu=\mathrm{NB}(2,p)$, we
	have:
	$$\mu(k)=p^k\,(1-p)\quad\text{and}\quad\nu(k)=(k+1)\,p^k\,(1-p)^2,\quad \text{for all } k\in\N_0,$$
	which implies $\mu(k)\geq\nu(k)$ for $k\leq\big\lfloor\tfrac{p}{1-p}\big\rfloor=:m$.
	
	Consequently, using	the well-known formulas for a finite geometric sum and its derivative, we can compute
	the total variation distance and get
	\begin{align*}
	\lVert\mu-\nu\rVert_{\mathrm{TV}}&=\sum_{k=0}^m p^k\,(1-p)-\sum_{k=0}^m (k+1)\,p^k\,(1-p)^2\\
	&=1-p^{m+1}-\big[1-p^{m+2}-(m+2)\,p^{m+1}\,(1-p)\big]\\
	&=(m+1)\,(1-p)\,p^{m+1}.
	\end{align*}
	If $p=\tfrac{m}{m+1}$, for some $m\in\N_0$, the number $\tfrac{p}{1-p}=m$ is integer and an elementary
	simplification of the expression to the right in \eqref{NB} verifies the final claim.
\end{proof}
\vspace*{1em}

Let us now use this lemma to establish the following result:

\begin{proposition}\label{ex}
	For all $\epsilon>0$, there exists a Markov chain segregating two states $x$ and $y$
	such that
	$$\lim_{n\to\infty}\lVert P^n(x,.)-P^n(y,.)\rVert_{\mathrm{TV}}>\frac{1}{\mathrm{e}}-\epsilon.$$
\end{proposition}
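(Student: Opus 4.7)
My plan is to exploit Lemma \ref{TV}: setting $p=\tfrac{m}{m+1}$ for a large integer $m$, the quantity $p^{1/(1-p)}=\bigl(\tfrac{m}{m+1}\bigr)^{m+1}$ tends to $1/\mathrm{e}$. It therefore suffices to construct a Markov chain whose asymptotic distributions starting at $x$ and $y$ realize thresholded versions of $\mathrm{NB}(1,p)$ and $\mathrm{NB}(2,p)$ on a common pair of absorbing states.

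The construction I have in mind uses a transient ``$c$-ladder'' $c_0,c_1,\ldots,c_N$ on which the chain advances $c_k\to c_{k+1}$ with probability $p$ and otherwise is absorbed at state $a$ (if $k\leq m$) or $b$ (if $k>m$). From $x$ the chain enters $c_0$ immediately after a single preparatory detour added for timing reasons, so the position at absorption has the $\mathrm{NB}(1,p)$ distribution. From $y$ the chain first runs through an auxiliary ``$d$-ladder'' with $d_k\to d_{k+1}$ w.p.\ $p$ and $d_k\to c_k$ w.p.\ $1-p$, adding an independent geometric delay before it lands on the $c$-ladder; the overall absorption position is then $\mathrm{NB}(2,p)$-distributed. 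The preparatory detour on $x$'s side synchronizes timings so that both chains, if still active on the $c$-ladder, would visit $c_k$ at the same moment, creating genuine candidate meeting points. Lemma \ref{TV} then gives the asymptotic TV distance on $\{a,b\}$ as $(m+1)(1-p)\,p^{m+1}\to 1/\mathrm{e}$.

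The main obstacle is designing a Markovian coupling of the two copies with $\Prob(\tau<\infty)=1$. Because the limiting TV is strictly positive, Theorem \ref{upperboundthm} forces this coupling to be non-faithful. A natural attempt is a ``greedy'' coupling: at every joint state $(c_k,d_{k+1})$ one assigns probability $1-p$ to the joint transition $(c_{k+1},c_{k+1})$ (an immediate meeting), probability $2p-1$ to the parallel continuation $(c_{k+1},d_{k+2})$, and probability $1-p$ to a joint state in which $X$ is absorbed while $Y$ moves to $d_{k+2}$; this assignment respects the required marginals precisely because $p>\tfrac12$. Once $X$ is absorbed, non-faithfulness is exploited further to steer $Y$ towards $X$'s absorbing state whenever its remaining trajectory permits. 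Verifying that this collection of local meeting opportunities accumulates to total probability one is the delicate technical step; once established, the limit statement follows immediately from Lemma \ref{TV} by taking $m$ sufficiently large.
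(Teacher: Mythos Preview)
Your chain is essentially the paper's: two parallel ladders on which the copy started at $x$ is absorbed at an $\mathrm{NB}(1,p)$-distributed level and the one started at $y$ at an $\mathrm{NB}(2,p)$-distributed level, with $p=\tfrac{m}{m+1}$ so that Lemma~\ref{TV} pushes the total variation toward $\tfrac{1}{\mathrm{e}}$. The paper keeps $m+2$ separate absorbing states rather than collapsing them into $\{a,b\}$, but since the maximizing set for the total variation is $\{0,\dots,m\}$ this is cosmetic.

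The genuine gap is the coupling. You explicitly flag the verification of $\Prob(\tau<\infty)=1$ as ``the delicate technical step'' and do not carry it out; moreover, the greedy scheme you describe does \emph{not} achieve it. In your three-way split at $(c_k,d_{k+1})$, the branch ``$X$ absorbed, $Y\to d_{k+2}$'' is taken with the same probability $1-p$ as the meeting branch, so from $(c_0,d_1)$ it is entered before any meeting with probability $\tfrac12$. Once it is, $X$ sits at an absorbing state and every subsequent joint transition from $(a,\,\cdot\,)$ or $(b,\,\cdot\,)$ is dictated entirely by the $Y$-marginal: there is no residual coupling freedom with which to ``steer'' $Y$, non-faithfulness notwithstanding. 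From $d_{k+2}$ the copy $Y$ is then absorbed at level $k+2$ plus an independent $\mathrm{NB}(2,p)$ variable, which for $p=\tfrac{m}{m+1}$ and $k\leq m$ overshoots the threshold $m$ with probability bounded away from $0$. The non-meeting probability of your coupling therefore stays bounded away from $0$ as $m\to\infty$.

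The paper's coupling avoids all of this and is immediate: let $\mathbf{Y}$ run freely, force $\mathbf{X}$'s first step to be ``down'' onto the $c$-ladder, and thereafter let $\mathbf{X}$ at time $n\to n+1$ perform whichever move type (down or sideways) $\mathbf{Y}$ performed at time $n-1\to n$. Because every non-absorbing state except $x$ has the same transition probabilities $(p,1-p)$, this preserves the $X$-marginal; and since $\mathbf{X}$ and $\mathbf{Y}$ then always sit at the same level of their respective ladders, they meet the instant $\mathbf{Y}$ crosses to the $c$-ladder (or at the terminal absorbing state if it never does). No accumulation argument is needed: $\tau\leq m+2$ deterministically.
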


\begin{proof}
	Let us consider the finite reducible MC depicted in Figure \ref{segex}. The state space $S$
	comprises $3m+5$ states, among which the two initial states $x$ and $y$ as well as the $m+2$ absorbing
	states labeled $0,\dots,m$ and $>$. 
	
	\begin{figure}[H]
		\centering
		\includegraphics[scale=1]{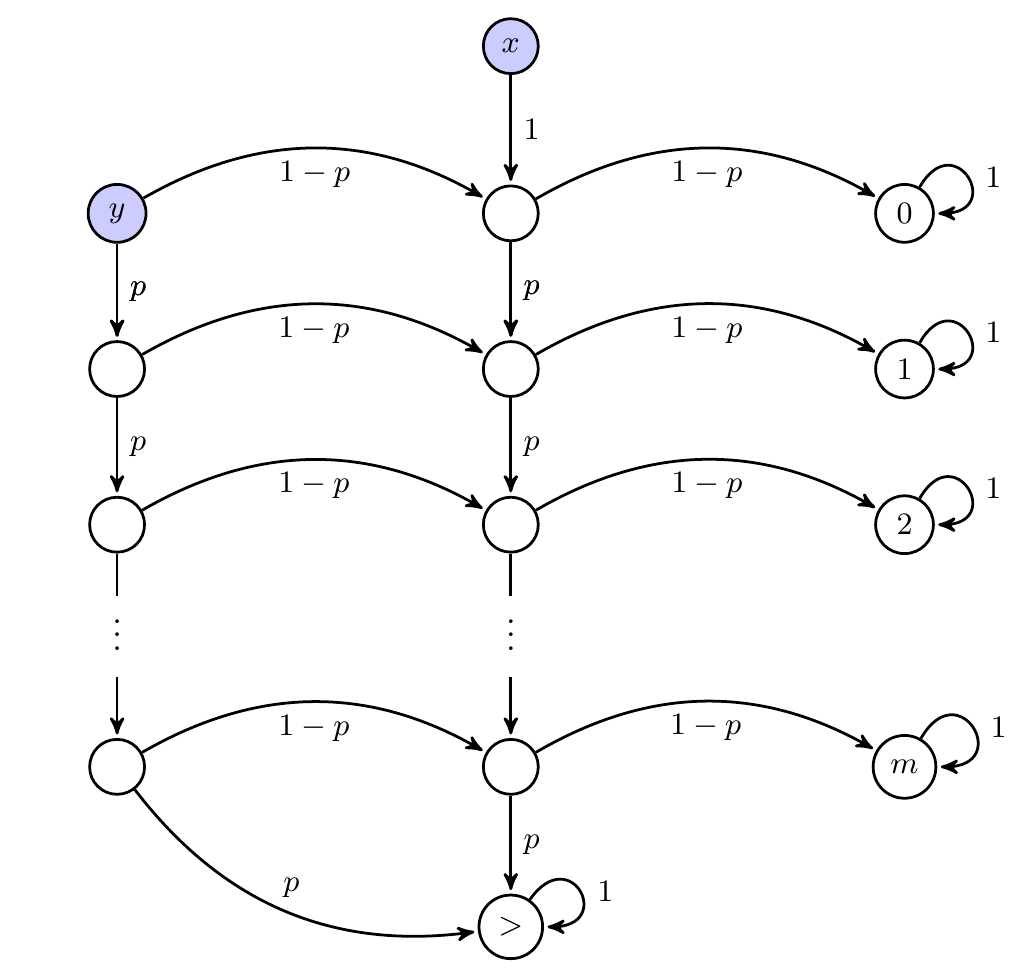}
		\caption{A segregating MC allowing for a fairly large total variation distance of two
			copies started in $x$ and $y$ respectively.}\label{segex}
	\end{figure}
	It is immediate to check that the two copies, started in $x$ respectively $y$, will hit an absorbing
	state after at most $m+2$ steps and that on $\{0,\dots,m\}$ the distributions $P^{m+2}(x,.)$ and $P^{m+2}(y,.)$ coincide
	with $\mathrm{NB}(1,p)$ and $\mathrm{NB}(2,p)$ respectively. The probabilities to land in the state
	labeled $>$ are
	$$P^{m+2}(x,>)=\Prob(Z_1> m)\quad\text{and}\quad P^{m+2}(y,>)=\Prob(Z_2> m),$$ 
	where $Z_1\sim\mathrm{NB}(1,p)$ and $Z_2\sim\mathrm{NB}(2,p)$.
	
	Choosing $p=\tfrac{m}{m+1}$, we can conclude from Lemma \ref{TV} that
	$$\lVert P^n(x,.)-P^n(y,.)\rVert_{\mathrm{TV}}=p^{\tfrac{1}{1-p}},$$ for all $n\geq m+2$. Writing
	$q:=\tfrac{1}{1-p}=m+1$ shows
	$$\lim_{p\to 1}p^{\tfrac{1}{1-p}}=\lim_{q\to\infty}\big(1-\tfrac1q\big)^q=\frac{1}{\mathrm{e}}.$$
	Taking $m$ large enough, more precisely such that $\big(1-\frac{1}{m+1}\big)^{m+1}>
	\frac{1}{\mathrm{e}}-\epsilon$, will establish the claim if we can present a coupling that ensures 
	that the two copies started in $x$ and $y$ will meet with probability $1$, either before or when they
	hit an absorbing state.
	
	In order to establish such a coupling, let ${\mathbf Y}=(Y_n)_{n\in\N_0}$ be a copy of the MC started in $y$. The
	copy ${\mathbf X}=(X_n)_{n\in\N_0}$ started in $x$ mimicks all movements of ${\mathbf Y}$, with the delay of one step, until
	it finally hits an absorbing state: First, it will move downwards until the two processes meet -- in
	particular this implies that its first step is downwards with probability 1, as $x\neq y$. Then,
	once $X_n=Y_n$ for some $1\leq n\leq m+1$, the next step of the process ${\mathbf X}$ is to move to the right to
	an absorbing state, i.e.\ $X_{n+1}=n-1$. If ${\mathbf Y}$ never moves to the right, neither does ${\mathbf X}$ and
	both finally end up in the state $>$.
	
	First of all, we need to check whether the two coordinate processes are indeed copies of the MC given in
	Figure \ref{segex}: It is obvious from our construction that it suffices to verify this for the process ${\mathbf X}$.
	The way ${\mathbf X}$ is defined -- to move downwards in the first step and then always imitate the previous move
	of ${\mathbf Y}$ until ending up in an absorbing state -- gives the right marginals due to the structure of the MC: As
	all the non-absorbing states apart from $x$ have the same transition probabilities ($p$ downwards and
	$1-p$ to the right), ${\mathbf X}$ performs indeed a random walk on the graph in Figure \ref{segex} according to the
	transition probabilites of the MC. Note that there is just this one way ${\mathbf Y}$ can end up in an absorbing state
	before it meets ${\mathbf X}$, namely if it moves downwards only. Then, however, ${\mathbf X}$ copies this
	behavior and ends up in the state $>$ as well, so the coupling guarantees $\tau\leq m+2$ with probability $1$.
	This trivially implies the almost sure finiteness of the first meeting time $\tau$ and in conclusion the claim
	that the MC segregates the
	two states $x$ and $y$.
\end{proof}
\vspace*{1em}

In order to get the idea of how faithfulness plays a crucial role in this context, it is worth noting
that the coupling in our example -- likewise the one in \cite{disagreement} -- is in fact Markovian, but
not faithful. Such couplings, however, have to be non-faithful as faithfulness would imply that the total
variation distance necessarily tends to $0$ as already mentioned at the end of Section \ref{concept}.

\section{Closing the gap}\label{ctg}

In this last section, we want to further improve the lower bound $\frac{1}{\mathrm{e}}$, established
by the example from the previous section, in order to determine the true value of the constant $\kappa$,
defined in \eqref{kappa}. These efforts amount to the following:

\begin{theorem}\label{1/2}
	The value of $\kappa$, denoting the supremum of
	$\lim_{n\to\infty}\lVert P^n(x,.)-P^n(y,.)\rVert_{\mathrm{TV}}$
	taken over all segregating Markov chains and segregated states $x$ and $y$, is $\tfrac12$.
\end{theorem}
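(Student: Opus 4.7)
Since the upper bound $\kappa \le \tfrac12$ is already established by Theorem \ref{upperboundthm}, the task reduces to proving the matching lower bound: for every $\epsilon>0$, exhibit a finite reducible Markov chain and states $x,y$ that are segregated with $\lim_{n\to\infty}\lVert P^n(x,.)-P^n(y,.)\rVert_{\mathrm{TV}}\ge \tfrac12-\epsilon$. This is the content of the announced Theorem \ref{lowerboundthm}, and combined with Theorem \ref{upperboundthm} it will immediately yield $\kappa=\tfrac12$. The plan is therefore to construct an explicit family of chains, parametrised by some integer $k$, whose limiting total variation distance tends to $\tfrac12$ as $k\to\infty$ while still admitting a coupling whose first meeting time is a.s.\ finite.

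As suggested by the section title, I would introduce a \emph{separation} quantity for pairs of distributions that (a) lower-bounds total variation and (b) is more tractable under iterated constructions than TV itself. A natural candidate is a one-sided quantity such as $\sigma(\mu,\nu):=\sup_A(\mu(A)-\nu(A))$ restricted to events $A$ with $\mu(A)\le \tfrac12$, which directly mirrors the threshold $\tfrac12$ appearing in the martingales $M_t, N_t$ of Proposition \ref{coupl} and allows one to push $P^n(x,A)$ close to $\tfrac12$ while $P^n(y,A)$ remains close to $0$. I would first verify a lemma relating this quantity to $\lVert\cdot\rVert_{\mathrm{TV}}$ and showing that it is well-behaved under a suitable "composition" of segregation gadgets. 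Then, starting from a small base gadget similar in spirit to Figure \ref{segex} but designed to produce a controlled amount of separation rather than an NB--style TV, I would assemble a cascade of $k$ such gadgets: within each level, the two coupled copies either meet (ending the coupling) or "survive" into the next level with their distributions further separated. Tuning the per-level survival and separation parameters — in analogy with the choice $p=\tfrac{m}{m+1}$ of Section \ref{exampl}, but balanced so that $(1-\alpha_k)^k\to 0$ and the cumulative separation tends to $\tfrac12$ instead of $1/\mathrm{e}$ — should close the gap.

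The main obstacle is twofold. First, at every level of the cascade one must verify that the two marginals are genuine copies of the composite chain while the joint process remains Markovian; this necessarily rules out faithful couplings (by the coupling inequality \eqref{couplineq}), so the construction must persistently exploit the non-faithfulness already present in Section \ref{exampl}, and the bookkeeping across levels is what I expect to be genuinely delicate. Second, one has to ensure that separation in the sense used really translates into total variation close to $\tfrac12$, i.e.\ the constructed measures saturate (asymptotically) the martingale inequality from Proposition \ref{coupl}, where both $M_{\tau_x}\to\tfrac12$ on $B_x^{\,\text{c}}$ and $N_{\tau_y}\to\tfrac12$ on $B_y^{\,\text{c}}$ in the limiting regime — realising this extremal behaviour in an honest finite Markov chain is the crux of the construction.
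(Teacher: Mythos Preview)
Your proposal is a plan rather than a proof, and it diverges from the paper's route in a way that hides a real difficulty. The paper's ``separation'' is not a one-sided distance between marginal distributions as you suggest; it is a trajectory-level quantity: for a sequence of sets $\mathcal{A}=(A_t)_{t=0}^T$, one defines $\mathcal{S}^{\mathcal{A}}_T(x,y)$ as the probability that $X_t\in A_t$ for all $t$ (starting at $x$) plus the probability that $X_t\notin A_t$ for all $t$ (starting at $y$). The crucial step is then a \emph{duality theorem} (Theorem \ref{thm:strongduality}), proved via max-flow/min-cut on the bipartite graph of trajectories, which says that the optimal meeting probability equals $2-\mathcal{S}_T(x,y)$. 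This converts the task of \emph{building} a coupling with meeting probability $1$ into the task of showing that \emph{no} non-trivial separating sequence achieves separation above $1$. The paper then analyses a simple birth-and-death chain on $\{0,\dots,L\}$ with small boundary rate $\alpha$, reduces the optimal separating sequence to a constant one $\underline{\mathbf{k}}$ up to $O(L\alpha)$ errors, and tunes $\alpha=\alpha(T)$ so that all non-trivial separations are at most $1$ while $\lVert P^T(0,.)-P^T(L,.)\rVert_{\mathrm{TV}}$ is close to $\tfrac12$.

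Your cascade-of-gadgets idea, by contrast, commits you to constructing the coupling explicitly and checking the marginals at each level. Not only is the ``bookkeeping'' you flag as delicate actually unspecified, but the paper itself remarks after Theorem \ref{lowerboundthm} that the coupling produced by the duality argument is almost certainly \emph{non-Markovian}, and that whether Markovian couplings can push past $\tfrac{1}{\mathrm{e}}$ is open. So a level-by-level construction in the spirit of Section \ref{exampl} may well be stuck below $\tfrac12$; at the very least, you have given no mechanism by which iterating the Figure \ref{segex} gadget escapes the $\tfrac{1}{\mathrm{e}}$ barrier rather than compounding it. The missing idea is precisely the duality that lets one certify $\mathcal{C}_T(x,y)=1$ without ever writing down the coupling.
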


In view of Theorem \ref{upperboundthm}, the final step to derive this result lies in proving that for any
$\epsilon>0$, a value of at least $\tfrac12-\epsilon$ can actually be attained. In order to do so, we will
focus on (reducible) Markov chains with a specific structure which allows us to consider even simpler chains
in finite time instead:
When talking about a Markov chain with finite time horizon ${\mathbf X}=(X_t)_{t=0}^T$ on state space $S$ with
transition probabilities $\{P(r,s);\ r,s\in S\}$, we are in fact thinking of a different chain, namely
the reducible Markov chain ${\mathbf Y}=(Y_n)_{n\in\N_0}$ on the state space $S\times\{0,\dots,T\}$, with
transition probabilities 
$$\Prob(Y_{n+1}=(s,n+1)\,|\,Y_n=(r,n))=P(r,s)\quad\text{for all } r,s\in S,\ n\in\{0,\dots,T-1\}$$
$$\text{and}\quad\Prob(Y_{n+1}=(s,T)\,|\,Y_n=(s,T))=1.$$
In other words, we consider the evolution in time as new generations
of states and stop the original chain at time $T$ by making all states corresponding to time $T$ absorbing.
Incidentally, the example given in \cite{disagreement} is also of this kind; it corresponds to a two state
Markov chain with $T=2$ (cf.\ Figure \ref{Ollesex}).

So for the remainder of this article, we actually consider Markov chains in discrete, finite time
and with finite state space only. With this simplification in mind, we want to prove the following:

\begin{theorem}\label{lowerboundthm}
	For any $\epsilon>0$ there exists a Markov chain $\mathbf{X}$, two states $x$ and $y$ and a positive
	integer $T$ such that
	$$\lVert P^T(x,.)- P^T(y,.)\rVert_{\mathrm{TV}} \geq \frac{1}{2}-\epsilon$$
	and such that there is a coupling of two copies of the chain, $((X_t,Y_t))_{t=0}^T$, with initial states $x$ and
	$y$ respectively satisfying
	$$\mathbb{P}\left(X_t =Y_t \text{ \upshape for some }0\leq t \leq T\right) = 1.$$
\end{theorem}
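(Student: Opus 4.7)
The plan is to exploit the reduction described in the paragraph just above the theorem and work exclusively with finite-time chains, where all states at time $T$ are absorbing. Guided by the proof of Proposition \ref{coupl}, the construction should produce a witnessing set $A\subseteq S$ together with initial states $x,y$ for which, writing $M_t:=\Prob(X_T\in A\,|\,X_t)$ and $N_t:=\Prob(Y_T\in A\,|\,Y_t)$, the martingale $M_t$ started at $x$ rarely falls below $\tfrac12$ (so that $P^T(x,A)\geq 1-\epsilon$), while the martingale $N_t$ started at $y$ sits close to $\tfrac12$ on average and with high probability is eventually forced below $\tfrac12$ (so that $P^T(y,A)\leq \tfrac12+\epsilon$). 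This would yield $\lVert P^T(x,\cdot)-P^T(y,\cdot)\rVert_{\mathrm{TV}}\geq P^T(x,A)-P^T(y,A)\geq \tfrac12-2\epsilon$, and a rescaling of $\epsilon$ then gives the theorem.

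To realize this, I would construct the chain layer by layer over the time steps $0,1,\dots,T$, letting each state at time $t$ carry a value equal to its conditional probability of ultimately landing in a distinguished absorbing state $a$, so that $A:=\{a\}$ is the witnessing set. The initial state $x$ gets value close to $1$, while $y$ gets value close to $\tfrac12$, and the transitions are prescribed so that this value performs a martingale on a finely discretized grid in $[0,1]$. The coupling is then designed in the spirit of the Markovian but non-faithful construction in Section \ref{exampl}: the trajectory from $x$ is biased to remain above $\tfrac12$, while the trajectory from $y$ has many small opportunities to drop farther down, and whenever the two trajectories happen to occupy the same grid state they are identified and thereafter run together. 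The parameters $T$ and the granularity of the grid are tuned simultaneously so that the meeting event takes place by time $T$ with probability tending to $1$.

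The hardest part will be reconciling two opposing demands: the gap $M_0-N_0$ we wish to engineer is as large as $\tfrac12$, which pulls the two chains apart, yet the coupling must still guarantee a.s.\ meeting, which pulls them together. I expect this trade-off to be quantified by the notion of \emph{separation} the section is named after, tracking how fast an appropriate separation-type distance between the conditional laws of $X_t$ and $Y_t$ shrinks along a carefully iterated refinement of the example from Section \ref{exampl}. The three estimates that need to be put together are then: (i) verifying that both coordinate processes remain genuine copies of the intended Markov chain under the non-faithful coupling, (ii) bounding the probability of not-yet-met by time $T$, and (iii) controlling the values $P^T(x,A)$ and $P^T(y,A)$ via the martingale structure of $M_t$ and $N_t$. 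The bookkeeping required to make (i) compatible with a long time horizon while maintaining the gap in (iii) is where the bulk of the technical work will lie.
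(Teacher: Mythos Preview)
Your plan has a genuine gap at the most delicate point: the existence of the coupling. You propose to build the coupling \emph{explicitly} ``in the spirit of the Markovian but non-faithful construction in Section~\ref{exampl}'', biasing the $X$-trajectory to stay above $\tfrac12$ and giving the $Y$-trajectory many small chances to drop below. But observe that at any meeting time the two chains occupy the same state, hence carry the \emph{same} value $M_\tau=N_\tau$; if $M_t$ is biased to stay above $\tfrac12$ and $N_t$ is pushed below $\tfrac12$, you are pulling the two value processes apart, not together. More importantly, the paper itself notes (in the Remark following the proof of Theorem~\ref{lowerboundthm}) that it is an \emph{open problem} whether a Markovian coupling can push the retained total variation beyond $\tfrac{1}{\mathrm{e}}$; the coupling that actually achieves $\tfrac12-\epsilon$ ``will have to take into account the whole trajectories'' and is obtained only implicitly. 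So an explicit Section~\ref{exampl}-style construction is precisely what is \emph{not} available here.

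What the paper does instead is to avoid constructing the coupling altogether. The key step you are missing is Theorem~\ref{thm:strongduality}: a max-flow/min-cut duality showing that the optimal meeting probability equals $2-\mathcal{S}_T(x,y)$, where $\mathcal{S}_T(x,y)$ is the maximal \emph{separation} over all sequences $(A_t)_{t=0}^T$ of subsets of $S$. Once this is in hand, proving $\mathcal{C}_T(x,y)=1$ reduces to showing that every non-trivial separating sequence has separation at most $1$, which is a concrete estimate on a single chain (the birth-and-death chain on $\{0,\dots,L\}$) rather than a coupling construction. Your vague mention of separation ``tracking how fast a separation-type distance shrinks'' does not capture this duality; without it, steps (i) and (ii) of your outline cannot be carried out for values beyond $\tfrac{1}{\mathrm{e}}$. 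Incidentally, the paper's extremal configuration is symmetric ($P^T(0,A)\approx\tfrac34$, $P^T(L,A)\approx\tfrac14$), not the asymmetric $(1,\tfrac12)$ pair you aim for.
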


Before proceeding with a proof of this theorem, we want to introduce an alternative way to view
segregating couplings. Let $\mathbf{X}$ be any discrete-time Markov chain with a finite state space $S$,
and fix two states $x, y\in S$ and a positive integer $T$.

For any coupling of two copies of this chain, $((X_t,Y_t))_{t=0}^T$, started in $X_0=x$ and $Y_0=y$
respectively, one can consider the corresponding \emph{meeting probability}
\begin{equation}\label{meetprob}
\mathbb{P}\left(X_t =Y_t\text{ for some }0\leq t\leq T\right).
\end{equation}
A natural question to ask in this setting is how large one can make this probability for a given (finite) Markov
chain and given $x, y$ and $T$ by maximizing over all such couplings. Let $\mathcal{X}$ denote the subset
of $\{x\}\times S^{T}$ consisting of all possible trajectories of $\mathbf{X}=(X_t)_{t=0}^T$, and
respectively $\mathcal{Y}\subseteq \{y\}\times S^{T}$ for $\mathbf{Y}=(Y_t)_{t=0}^T$.
Observe that any coupling of $\mathbf{X}$ and $\mathbf{Y}$ is determined by the values
$(p_{\mathbf{xy}})_{\mathbf{x}\in\mathcal{X}, \mathbf{y}\in\mathcal{Y}}$, where $p_{\mathbf{xy}}$ denotes
the probability of the event that both $\mathbf{X}=\mathbf{x}=(x_t)_{t=0}^T$ and $\mathbf{Y}=\mathbf{y}$.
We will denote the maximal value of \eqref{meetprob} by $\mathcal{C}_T(x,y)$ and call it the
\emph{optimal meeting probability}. 


While finding explicit couplings that maximize the meeting probability can quickly become cumbersome as the
number of possible trajectories grows, it turns out that the problem of optimizing the meeting probability
has a useful dual, which allows us to determine $\mathcal{C}_T(x,y)$ without having to deal with the
couplings directly. This duality corresponds to the idea of max-flow/min-cut and K\"onig's theorem in
combinatorial optimization.

\begin{definition}
	Let $\mathcal{A}=(A_t)_{t=0}^T$ be a sequence of subsets of $S$, the (finite) state space of the considered Markov
	chain $\mathbf{X}$. We will refer to any such sequence as a \emph{separating sequence}. We define the
	\emph{separation} of any separating sequence as
	\begin{equation}\label{eq:separation}
	\begin{split}
	\mathcal{S}^{\mathcal{A}}_T(x,y) &= \mathbb{P}\left( X_t \in A_t\text{ for all }0\leq t \leq T \,|\, X_0=x\right)\\ &\hspace{.1em}+ \mathbb{P}\left( X_t \not\in A_t\text{ for all }0\leq t \leq T \,|\,X_0=y\right).
	\end{split}
	\end{equation}
	We say that the separating sequence is \emph{non-trivial} if both summands on the right-hand side in
	\eqref{eq:separation} are non-zero. We further define the \emph{optimal separation} $\mathcal{S}_T(x,y)$
	as the maximum separation over all possible separating sequences.
\end{definition}

It is not too hard to see that the optimal meeting probability and optimal separation are related.
Specifically, for any coupling $((X_t, Y_t))_{t=0}^T$ such that $(X_0, Y_0) = (x, y)$ and any
separating sequence $\mathcal{A}=(A_t)_{t=0}^T$ we have
\begin{equation*}
\begin{split}
&\mathbb{P}\left(X_t =Y_t\text{ for some }0\leq t\leq T\right)\\
&\qquad\leq \mathbb{P}\left( X_t \not\in A_t \text{ or } Y_t \in A_t \text{ for some }0\leq t\leq T\right)\\
&\qquad\leq \mathbb{P}\left( X_t \not\in A_t\text{ for some }0\leq t\leq T\right)
+\mathbb{P}\left( Y_t \in A_t\text{ for some }0\leq t\leq T\right)\\
&\qquad= 2-\mathcal{S}^{\mathcal{A}}_T(x,y).
\end{split}
\end{equation*}
Maximizing the meeting probability over all possible couplings of two copies, started in $x,y$
respectively, and minimizing the upper bound by maximizing the separation $\mathcal{S}^{\mathcal{A}}_T(x,y)$
over all separating sequences yields
\begin{equation}\label{eq:oneway}
\mathcal{C}_T(x,y)\leq 2-\mathcal{S}_T(x,y).
\end{equation}
However, for our purposes (namely to guarantee $\mathcal{C}_T(x,y)$=1), we rather need to bound
$\mathcal{C}_T(x,y)$ from below.
In this respect it is quite convenient that the inequality \eqref{eq:oneway} actually holds as an equality, as
the following theorem shows.
\begin{theorem}\label{thm:strongduality}
	Given an arbitrary but fixed finite Markov chain $\mathbf{X}$, two states $x,y\in S$ and
	time horizon $T$, we have
	\begin{equation}\label{eq:strongduality}
	\mathcal{C}_T(x,y) = 2-\mathcal{S}_T(x,y).
	\end{equation}
\end{theorem}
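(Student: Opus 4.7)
The inequality $\mathcal{C}_T(x,y) \leq 2-\mathcal{S}_T(x,y)$ is already established, so the task reduces to proving the reverse direction, $\mathcal{C}_T(x,y) \geq 2-\mathcal{S}_T(x,y)$. Since $S$ and $T$ are finite, the trajectory sets $\mathcal{X}$ and $\mathcal{Y}$ are finite and the problem of maximizing the meeting probability over all couplings is a finite linear program in the variables $(p_{\mathbf{x}\mathbf{y}})_{\mathbf{x}\in\mathcal{X},\mathbf{y}\in\mathcal{Y}}$. The plan is to apply LP duality (equivalently, the max-flow/min-cut theorem) and then translate the dual optimum into a separating sequence.

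Concretely, the primal is to maximize $\sum_{\mathbf{x},\mathbf{y}\text{ meet}}p_{\mathbf{x}\mathbf{y}}$ subject to $p_{\mathbf{x}\mathbf{y}}\geq 0$, $\sum_{\mathbf{y}}p_{\mathbf{x}\mathbf{y}}=\mathbb{P}(\mathbf{X}=\mathbf{x})$ and $\sum_{\mathbf{x}}p_{\mathbf{x}\mathbf{y}}=\mathbb{P}(\mathbf{Y}=\mathbf{y})$. Relaxing the marginal equalities to $\leq$ yields an equivalent LP (any feasible point extends to an equality-constrained one by redistributing the residual marginal mass, which can only raise the objective since it is supported on meeting indicators). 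Its dual has variables $\alpha_{\mathbf{x}},\beta_{\mathbf{y}}\geq 0$ constrained by $\alpha_{\mathbf{x}}+\beta_{\mathbf{y}}\geq 1$ whenever $\mathbf{x}$ and $\mathbf{y}$ meet, with objective $\sum_{\mathbf{x}}\alpha_{\mathbf{x}}\mathbb{P}(\mathbf{X}=\mathbf{x})+\sum_{\mathbf{y}}\beta_{\mathbf{y}}\mathbb{P}(\mathbf{Y}=\mathbf{y})$. This is the weighted minimum-vertex-cover LP on the bipartite meeting graph, whose constraint matrix is totally unimodular; hence it admits an integer optimal solution. Equivalently, one can phrase the primal as max-flow on a source-$\mathcal{X}$-$\mathcal{Y}$-sink network with source-to-$\mathbf{x}$ capacities $\mathbb{P}(\mathbf{X}=\mathbf{x})$, $\mathbf{y}$-to-sink capacities $\mathbb{P}(\mathbf{Y}=\mathbf{y})$, and infinite capacity on each meeting edge, in which a min cut is automatically given by subsets $\mathcal{X}'\subseteq\mathcal{X}$ and $\mathcal{Y}'\subseteq\mathcal{Y}$ that cover every meeting pair, with $\mathbb{P}(\mathbf{X}\in\mathcal{X}')+\mathbb{P}(\mathbf{Y}\in\mathcal{Y}')=\mathcal{C}_T(x,y)$.

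Given such a cover, I would build the separating sequence $A_t:=\{s\in S:\exists\,\mathbf{x}\notin\mathcal{X}'\text{ with }x_t=s\}$. By construction, every trajectory $\mathbf{x}\notin\mathcal{X}'$ satisfies $x_t\in A_t$ for all $t$. Conversely, if some $\mathbf{y}\notin\mathcal{Y}'$ had $y_{t_0}\in A_{t_0}$ for some $t_0$, a witness $\mathbf{x}\notin\mathcal{X}'$ with $x_{t_0}=y_{t_0}$ would form an uncovered meeting pair, contradicting the cover property. Therefore
$$\mathcal{S}_T^{\mathcal{A}}(x,y)\geq\mathbb{P}(\mathbf{X}\notin\mathcal{X}')+\mathbb{P}(\mathbf{Y}\notin\mathcal{Y}')=2-\mathcal{C}_T(x,y),$$
which rearranges to $\mathcal{C}_T(x,y)\geq 2-\mathcal{S}_T(x,y)$ and finishes the proof.

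The main obstacle is the integrality of the optimal dual / min-cut solution: a fractional cover could not be folded into a single separating sequence by the construction above. I would handle this either by invoking the total unimodularity of the bipartite edge-vertex incidence matrix, or more concretely via the standard integrality of min cuts in the max-flow formulation, where no edge of infinite capacity can cross a finite cut. Once this ingredient is in place, the remaining combinatorial passage from the cover $(\mathcal{X}',\mathcal{Y}')$ to the sequence $\mathcal{A}$ is short and direct.
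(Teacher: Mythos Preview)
Your proof is correct and follows essentially the same route as the paper: both set up the max-flow/min-cut problem on the bipartite trajectory graph with source-capacities $\mathbb{P}(\mathbf{X}=\mathbf{x})$, sink-capacities $\mathbb{P}(\mathbf{Y}=\mathbf{y})$, and large capacity on meeting edges, and then convert a minimum cut (equivalently, your integral vertex cover $(\mathcal{X}',\mathcal{Y}')$) into a separating sequence via exactly the construction $A_t=\{x_t:\mathbf{x}\in\mathcal{X}\setminus\mathcal{X}'\}$. The paper uses middle-edge capacity $1$ together with the trivial cut of value $1$ to rule out cutting meeting edges, whereas you use infinite capacity or total unimodularity; these are interchangeable ways of securing the same integrality.
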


\begin{proof}
	A simple way to prove the reverse inequality is to employ the max-flow min-cut theorem, in the same
	way it can be used to prove Strassen's monotone coupling theorem. Starting from the sets $\mathcal{X}$
	and $\mathcal{Y}$ as above, we build the following directed graph, which will be denoted by $\oldvec{G}=(V,\oldvec{E})$:
	
	First we let each $\mathbf{x}\in\mathcal{X}$ and $\mathbf{y}\in\mathcal{Y}$ be represented by a node.
	Then we add two further nodes: a source $s$ and a sink $t$. When it comes to the directed edges, there
	will be an arrow $(s,\mathbf{x})$ for all $\mathbf{x}\in\mathcal{X}$ and $(\mathbf{y},t)$ for all 
	$\mathbf{y}\in\mathcal{Y}$. Additionally, we include the edge $(\mathbf{x},\mathbf{y})$, if the two
	trajectories $\mathbf{x}\in\mathcal{X}$ and $\mathbf{y}\in\mathcal{Y}$ share at least one state,
	i.e.\ $x_t=y_t$ for some $0\leq t \leq T$; in the sequel, we will write this as $\mathbf{x}\sim\mathbf{y}$.
	
	Finally, we have to assign capacities to these directed edges: The edges $(s,\mathbf{x})$ and $(\mathbf{y},t)$
	will get capacities $\Prob(\mathbf{X}=\mathbf{x})$ and $\Prob(\mathbf{Y}=\mathbf{y})$ respectively.
	All edges going in between $\mathcal{X}$ and $\mathcal{Y}$ get capacity $1$, see Figure \ref{mcmfgraph}
	below for an illustration.
	
	\begin{figure}[H]
		\centering
		\includegraphics[width=0.9\textwidth]{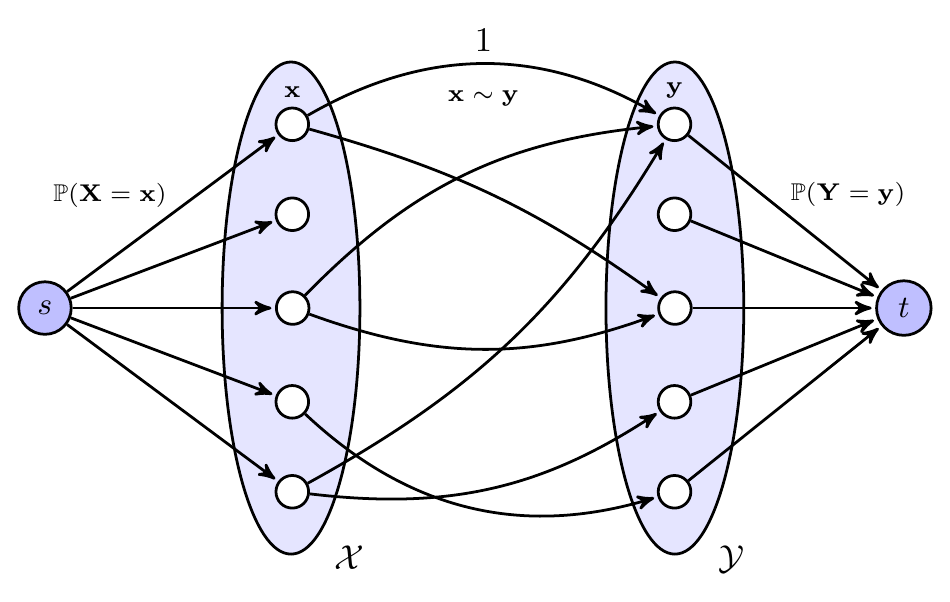}
		\caption{The auxiliary graph $\oldvec{G}$ to which we apply the max-flow min-cut theorem.}\label{mcmfgraph}
	\end{figure}
	
	Let us now consider the minimum cut problem on $\oldvec{G}$.
	From the fact that the cut $(\{s\},V\setminus\{s\})$ has value $1$, we know that we can focus on cuts
	that are not cutting any edges going in between $\mathcal{X}$ and $\mathcal{Y}$ when trying to find a minimal
	one. For a cut $(B,B^\text{c})$ of this kind, with $s\in B$ and $t\in B^\text{c}$ say, we note that
	$\mathbf{x}\sim\mathbf{y}$ can not occur for $\mathbf{x}\in\mathcal{X}\cap B$ and
	$\mathbf{y}\in\mathcal{Y}\cap B^\text{c}$, due to the assumption that no such edges are cut.
	Furthermore, the edges cut incident to $s$
	have at least the value $\Prob(X\in \mathcal{X}\cap B^\text{c})$, the ones incident to $t$ at least
	$\Prob(Y\in \mathcal{Y}\cap B)$.
	
	Let us define the set sequence $\mathcal{A}=(A_t)_{t=0}^T$ by
	$$A_t:=\{x_t;\ \mathbf{x}=(x_s)_{s=0}^T\in \mathcal{X}\cap B\},\text{ for all } 0\leq t\leq T,$$
	in order to bound the value of the given cut from below by
	\begin{align*}\Prob(X\in \mathcal{X}\cap B^\text{c})+\Prob(Y\in \mathcal{Y}\cap B)\\
	&\hspace{-4cm}\geq 1-\mathbb{P}(X_t \in A_t\text{ for all }0\leq t \leq T)
	+ 1-\mathbb{P}(Y_t \not\in A_t\text{ for all }0\leq t \leq T)\\
	&\hspace{-4cm}= 2-\mathcal{S}^{\mathcal{A}}_T(x,y)\\
	&\hspace{-4cm}\geq 2-\mathcal{S}_T(x,y).
	\end{align*}
	Consequently, as this bound applies to any minimal cut, using the max-flow min-cut theorem (see for example
	Thm.\ 1, Chap.\ III in \cite{Bollobas}) we are guaranteed the existence of a maximal flow of value at least
	$2-\mathcal{S}_T(x,y)$. Let us denote the respective flow through the edge corresponding to
	$\mathbf{x}\sim\mathbf{y}$ by $q_{\mathbf{x}\mathbf{y}}$.
	
	We can use this maximal flow to establish a coupling of $\mathbf{X}$ and $\mathbf{Y}$ in the same vein
	as in Doeblin's coupling lemma (see for instance Prop.\ 4.7 in \cite{Mixing}):
	First we let $\mathbf{X}=\mathbf{x}$ and $\mathbf{Y}=\mathbf{y}$ simultaneously with probability
	$q_{\mathbf{x}\mathbf{y}}$ for all $\mathbf{x}\sim\mathbf{y}$.
	Then we define $\mathbf{X}$ to follow the trajectory $\mathbf{x}$ with the remaining probability
	$$\Prob(\mathbf{X}=\mathbf{x})-\sum_{\mathbf{y}:\ \mathbf{x}\sim\mathbf{y}} q_{\mathbf{x}\mathbf{y}}$$
	and similarly $\mathbf{Y}$ to follow the trajectory $\mathbf{y}$ with probability
	$$\Prob(\mathbf{Y}=\mathbf{y})-\sum_{\mathbf{x}:\ \mathbf{x}\sim\mathbf{y}} q_{\mathbf{x}\mathbf{y}},$$
	independently, for all $\mathbf{x}\in\mathcal{X}$ and $\mathbf{y}\in\mathcal{Y}$.
	
	From the flow constraints, we know that all these probabilities are in $[0,1]$ and the
	resulting coupling satisfies $\Prob(\mathbf{X}\sim\mathbf{Y})\geq 2-\mathcal{S}_T(x,y)$.
	The theorem then follows by combining this inequality with \eqref{eq:oneway}.
\end{proof}
\vspace{1em}

One can observe that, as the left-hand side of \eqref{eq:strongduality} is a probability and hence
at most $1$, we must always have optimal separation at least $1$. Indeed, we can obtain separation
equal to $1$, for instance by taking $A_t=S$ for all $0\leq t \leq T$. Recall that a separating
sequence is called non-trivial if the probabilities that $X_t\in A_t$ for all $0\leq t\leq T$ given
$X_0 = x$ and $X_t \not\in A_t$ for all $0\leq t \leq T$ given $X_0=y$ are both non-zero. Clearly,
any trivial separating sequence has separation at most $1$, so it follows from Theorem
\ref{thm:strongduality} that for any finite state Markov chain in discrete time, any $x, y$ and
$T$ as above, the following two statements are equivalent:
\begin{enumerate}[(a)]
	\item The meeting probability under optimal coupling of two copies, started in $x,y$
	respectively, is $1$.
	\item For all non-trivial separating sequences $\mathcal{A}=(A_t)_{t=0}^T$, we get
	$\mathcal{S}_T^{\mathcal{A}}(x,y) \leq 1$.
\end{enumerate}

\vspace*{.5em}
In order to get acquainted with the idea behind the concept of separation, let us take a look
at the simplest non-trivial example:

\begin{example}
	Let $0 < \alpha \leq \frac{1}{2}$. Consider the Markov chain $\mathbf{X}$ with state space
	$\{0, 1\}$ and transition probabilities $P(0,1) = P(1,0) = \alpha$ as well as $P(0,0)=P(1,1) = 1-\alpha$,
	and take $x=0,\ y=1$. As mentioned above, the case where $T=2$ is H\"aggstr\"om's \cite{disagreement}
	example of a segregating Markov chain.
	
	Since this chain only has two states, any non-trivial separating sequence must have $A_t= \{0\}$ or
	$A_t=\{1\}$ for each $0\leq t\leq T$. As $\alpha \leq \frac{1}{2}$, the non-trivial separating sequence
	given by $A_t=\{0\}$ for all $t$ is obviously best possible. It is immediate to check that its
	separation equals $2(1-\alpha)^T$. Hence, the optimal meeting probability of two copies, started in
	states $0$ and $1$ respectively, is $1$ if and only if 	$2(1-\alpha)^T \leq 1$.
	Using induction, one can easily check that for this chain we have
	$$ \lVert P^T(0,.)- P^T(1,.)\rVert_{\mathrm{TV}} = (1-2\alpha)^T.$$
	So by choosing $\alpha =  \alpha(T)$ such that $2(1-\alpha)^T=1$, we obtain a Markov chain that
	segregates the states $0$ and $1$ with total variation distance $(2^{1-1/T} - 1)^T$, which tends
	to $\frac{1}{4}$ as $T\rightarrow\infty$.
\end{example}

The next example is supposed to illustrate that reducible Markov chains obtained from irreducible
and aperiodic finite chains in finite time, in the way described before Theorem \ref{lowerboundthm},
usually do segregate any two states:

\begin{example}
	Let $\mathbf{X}$ be any irreducible aperiodic Markov chain with a finite state space $S$, and let $x$
	and $y$ be any two states. Pick $\epsilon>0$ and $n\in\N$ such that $P^n(x', y') \geq \epsilon$
	for all $x',y'\in S$. Then, for any non-trivial separating sequence $(A_t)_{t=0}^{nk}$, we have
	$$\mathcal{S}^{\mathcal{A}}_{nk}(x,y) \leq 2\,(1-\epsilon)^k$$
	for any $k\in\N$. Hence, by taking $T=nk$ for a sufficiently large $k$ it follows that the optimal
	meeting probability during $[0, T]$ is $1$. This shows that, unless
	$\lVert P^T(x,.)- P^T(y,.)\rVert_{\mathrm{TV}}=0$, the Markov chain segregates the two states $x$ and
	$y$, choosing $T$ sufficiently large.
\end{example}
\vspace{.5em}

We now turn to the proof of Theorem \ref{lowerboundthm}. Let $\mathbf{X}$ be the Markov chain with state
space $\{0, 1, \dots, L\}$ for some positive integer $L$ and transition probabilities given by
$P(0, 1) = P(L, L-1) = 1-P(0, 0) = 1-P(L, L) = \alpha$ as well as $P(i, i+1) = P(i, i-1) = \frac{1}{2}$
for all $0< i < L$, see Figure \ref{badc}. Such chains, with $S=\{0, 1, \dots, L\}$ and the additional
property that $|X_{t+1}-X_t|\leq 1$ a.s.\ for all $t$, are commonly called finite {\em birth-and-death chains},
cf.\ Section 2.5 in \cite{Mixing}. To begin with, our main interest lies in the optimal meeting probability
of this chain given the starting states $x=0$ and $y=L$. We will then show that we can obtain segregation
between $0$ and $L$ with total variation arbitrarily close to $\frac{1}{2}$ by choosing $L$, $T$ and
$\alpha$ appropriately.

\begin{figure}[H]
	\centering\vspace*{-0.5em}
	\includegraphics[width=\textwidth]{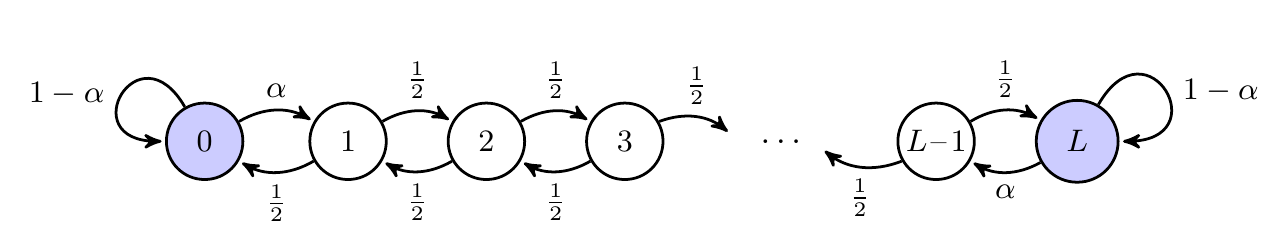}
	\caption{A birth-and-death chain on $\{0, 1, \dots, L\}$ as basic building block.}\label{badc}
\end{figure}

The qualitative behavior of this chain for small $\alpha$ is easy to describe: Most of the time, the
process is either at $0$ or $L$. Occasionally, that is at rate $\alpha$, the process takes one step
inwards and typically moves around for order $L$ steps before hitting one of the marginal states
$0$ or $L$, more precisely the expected time to reach $\{0,L\}$, when starting in state $1$ equals $L-1$.
With probability $1-\frac{1}{L}$ it will return to the same side it started at, with probability
$\frac{1}{L}$ it will cross over to the opposite side (check the analysis of the so-called
gambler's ruin problem in Section 2.1 in \cite{Mixing} for the explicit calculations).


In preparation to the in-depth analysis of the separation of states $0$ and $L$, let us
collect a few general estimations for this chain in the following lemma, which will come in useful later on.
For the sake of clarity we will use the standard big $O$ notation to represent error terms, i.e.\ for any
non-negative function $f$ in $k$ and $\alpha$, the expression $O(f(k,\alpha))$ denotes a quantity that is
bounded in absolute value by $c\cdot f(k,\alpha)$, where the constant $c>0$ does not depend on $k,\ \alpha$
or $t$.

\begin{lemma}\label{Fact1.3alt}
	\begin{enumerate}[(a)]
		\item For any $1 \leq  i \leq  L-1$ and $t\geq 0$ we have $P^t(0, i)=P^t(L, L-i)\leq 2\alpha$.
		\item For any $t\geq 0$ we have
		\begin{equation}\label{eq:P00LLalt}
		P^t(0, 0) = P^t(L, L) = \frac{1}{2} + \frac{1}{2} \left(1-\frac{2\alpha}{L}\right)^t + O(L\alpha),
		\end{equation}
		\begin{equation}\label{eq:TValt}
		\lVert P^t(0,.)- P^t(L,.)\rVert_{\mathrm{TV}} = \left(1-\frac{2\alpha}{L}\right)^t + O(L\alpha),
		\end{equation}
		\item For any $0\leq k \leq L-1$ and $t\geq 0$ we have
		\begin{equation}\label{eq:keepsmallalt}\begin{split}
		&\mathbb{P}\left( X_{t'} \leq k \text{ \upshape for all }0\leq t' \leq t\,|\, X_0=0\right)\\
		&\qquad= \mathbb{P}\left( X_{t'} \geq L- k \text{ \upshape for all }0\leq t' \leq t\,|\, X_0=L\right)\\
		&\qquad = \left( 1 - \frac{\alpha}{k+1}\right)^t + O(k\alpha).
		\end{split}\end{equation}
	\end{enumerate}
\end{lemma}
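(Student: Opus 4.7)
The proof rests on two structural features of the chain: it is reversible, and it is invariant under the mirror map $i\mapsto L-i$. Detailed balance readily gives a stationary distribution with $\pi(0)=\pi(L)$ and $\pi(i)=2\alpha\pi(0)$ for all $1\le i\le L-1$ (from $\alpha\pi(0)=\tfrac12\pi(1)$ at the boundary and $\tfrac12\pi(i-1)=\tfrac12\pi(i)$ inside). Reversibility then gives $\pi(0)P^t(0,i)=\pi(i)P^t(i,0)$, so $P^t(0,i)=2\alpha\cdot P^t(i,0)\le 2\alpha$ for every interior $i$, and the equality $P^t(0,i)=P^t(L,L-i)$ is immediate from the mirror symmetry. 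This disposes of Part~(a).

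For Part~(b), summing the bound from (a) over the $L-1$ interior states yields $P^t(0,0)+P^t(0,L)=1+O(L\alpha)$ uniformly in $t$. Combining this with the mirror identity $P^t(L,i)=P^t(0,L-i)$ and expanding $\tfrac12\sum_i|P^t(0,i)-P^t(L,i)|$ reduces the target to
\[\|P^t(0,\cdot)-P^t(L,\cdot)\|_{\mathrm{TV}}=|P^t(0,0)-P^t(0,L)|+O(L\alpha),\]
so both claims in (b) follow once we show $P^t(0,0)-P^t(0,L)=(1-2\alpha/L)^t+O(L\alpha)$. I would prove this via a cycle decomposition: partition time into boundary-to-boundary cycles, each comprising a $\mathrm{Geom}(\alpha)$ holding time at the current boundary followed by a simple random walk excursion from the adjacent interior state; by gambler's ruin on $\{0,\dots,L\}$ the excursion ends at the opposite boundary with probability $1/L$. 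The induced ``side chain'' is then a two-state Markov chain that switches with probability $1/L$ per cycle, for which after $n$ cycles $\mathbb{P}(\text{side}=0)-\mathbb{P}(\text{side}=L)=(1-2/L)^n$ exactly. Converting the cycle count to wall-clock time via the fact that cycles have mean length $1/\alpha+O(L)$, and absorbing the probability of being mid-excursion (which is $O(L\alpha)$ by Part~(a)), then delivers the estimate.

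For Part~(c), an analogous cycle analysis works for the chain killed upon first hitting state $k+1$. Each cycle starts at $0$, waits $\mathrm{Geom}(\alpha)$ steps, then performs a simple random walk excursion from state $1$; by gambler's ruin on $\{0,\dots,k+1\}$ this excursion is killed with probability $1/(k+1)$ and returns to $0$ with probability $k/(k+1)$. Hence the survival probability after $n$ cycles is exactly $(1-1/(k+1))^n$, and substituting $n\approx\alpha t$ gives the main term $(1-\alpha/(k+1))^t$, while the $O(k\alpha)$ error absorbs both the deviation of the cycle count from its mean and the probability of being mid-excursion at time $t$. The main obstacle throughout is making these cycle-to-time conversions uniform in $t$; I would handle this by splitting into two regimes. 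For moderate $t$, standard concentration for sums of i.i.d.\ geometric random variables gives the required precision, while once $t$ is so large that $(1-2\alpha/L)^t$ (resp.\ $(1-\alpha/(k+1))^t$) drops below the error term, both sides of the claimed equality are already absorbed into the $O(\cdot)$ and there is nothing to prove.
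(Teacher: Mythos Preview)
Your argument for part (a) via reversibility is correct and in fact cleaner than the paper's: the paper proves $P^t(0,i)\le 2\alpha$ by a direct induction on $t$, whereas your detailed-balance computation $\pi(i)=2\alpha\pi(0)$ together with $\pi(0)P^t(0,i)=\pi(i)P^t(i,0)$ gives the bound in one line.

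For parts (b) and (c), however, the cycle-decomposition route has a real gap. The step ``substituting $n\approx\alpha t$ turns $(1-\tfrac{1}{k+1})^n$ into $(1-\tfrac{\alpha}{k+1})^t$'' is not an identity but an approximation whose quality depends on $k$: one has
\[
\alpha t\,\log\!\Big(1-\tfrac{1}{k+1}\Big)-t\,\log\!\Big(1-\tfrac{\alpha}{k+1}\Big)
\;=\;-\tfrac{\alpha t}{2(k+1)^2}+O\!\big(\tfrac{\alpha t}{(k+1)^3}\big),
\]
so for small $k$ (already $k=1$) the two expressions differ by a constant factor in the exponent, which is an $O(1)$ error, not $O(k\alpha)$. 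Even in (b), where the per-cycle switch probability $2/L$ is small, the same computation gives a discrepancy of order $e^{-u}u/L$ with $u=2\alpha t/L$, hence $\Theta(1/L)$ at $u\asymp 1$; since the lemma must hold uniformly in $t$ and for arbitrarily small $\alpha$ (the downstream application fixes $L$ and sends $\alpha\to 0$), this exceeds the claimed $O(L\alpha)$. A second issue is that within a cycle the switch/kill indicator and the cycle length are \emph{not} independent (a crossing excursion from $1$ to $L$ is stochastically longer than a returning one), so the side process is not simply a function of the cycle count $N_t$, and concentration of $N_t$ alone does not control the quantity you need.

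The paper sidesteps all of this by tracking the expected position directly. For (b) one sets $a_t:=\mathbb{E}[X_t\mid X_0=0]$; part (a) gives $a_t=L\cdot P^t(0,L)+O(L^2\alpha)=L-L\cdot P^t(0,0)+O(L^2\alpha)$, and since $\mathbb{E}[X_{t+1}-X_t\mid X_t]$ equals $\alpha$ at $0$, $-\alpha$ at $L$ and $0$ elsewhere, one obtains the linear recursion
\[
a_{t+1}-\tfrac{L}{2}=\Big(1-\tfrac{2\alpha}{L}\Big)\Big(a_t-\tfrac{L}{2}\Big)+O(L\alpha^2),
\]
which solves to $a_t=\tfrac{L}{2}-\tfrac{L}{2}(1-\tfrac{2\alpha}{L})^t+O(L^2\alpha)$ because the per-step error sums against $\sum_{j<t}(1-2\alpha/L)^j\le L/(2\alpha)$. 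Part (c) is handled identically with $b_t:=\mathbb{E}[X_{t\wedge\tau}\mid X_0=0]$, where $\tau$ is the first hit of $k+1$. This approach is short, needs no renewal estimates, and produces the stated $O(L\alpha)$ (resp.\ $O(k\alpha)$) error automatically.
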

\begin{proof}\hfill
	\begin{enumerate}[(a)]
		\item  The first statement easily follows by induction on $t$ using the recursion
		$$P^{t+1}(0, i) = P^t(0, i-1)\, P(i-1, i) + P^t(0, i+1)\, P(i+1, i),$$
		which holds for any $t\geq 0$ and any $1\leq i \leq L-1$.
		
		\item To show the second claim, consider the sequence
		$a_t = \mathbb{E}\left[ X_t \,|\, X_0=0\right]$. Using part (a), we know that 
		\begin{equation}\label{eq:atPrel}
		a_t = L\cdot P^t(0, L)+O(L^2\alpha)=L-L\cdot P^t(0, 0) + O(L^2\alpha),
		\end{equation}
		where the error terms are bounded by $2\,L^2\alpha$ in absolute value.
		Furthermore, since $\mathbb{E}\left[ X_{t+1}-X_t\,|\, X_t\right]$ equals $\alpha$ if $X_t=0$, $-\alpha$ if $X_t=L$ and $0$ otherwise, we can infer from \eqref{eq:atPrel}
		\begin{align*}
		a_{t+1} &= a_t + \alpha\cdot P^t(0, 0) - \alpha\cdot P^t(0, L)\\
		&= a_t + \alpha\,\left(1 - \frac{a_t}{L} \right) - \alpha\, \frac{a_t}{L} + O(L\alpha^2),
		\end{align*}
		which implies 
		\begin{align*}
		a_{t+1}-\frac{L}{2} = \left(1-\frac{2\alpha}{L}\right)\cdot\left(a_t - \frac{L}{2}\right) + O(L\alpha^2),
		\end{align*}
		where the error term is bounded by $4\,L\alpha^2$ in absolute value irrespectively of $t$.
		Solving this recursion, using $a_{0} = 0$ and $\sum_{k=0}^{t-1}(1-\tfrac{2\alpha}{L})^k\leq\tfrac{L}{2\alpha}$,	
		yields $$a_t = \frac{L}{2} - \frac{L}{2}\,\left(1-\frac{2\alpha}{L}\right)^t + O(L^2\alpha).$$ The estimate \eqref{eq:P00LLalt} immediately follows from \eqref{eq:atPrel}, which together with part (a) implies
		\eqref{eq:TValt}.
		
		\item The case $k=0$ is obvious, so we may assume $k>0$. Let $\tau$ denote the first time $t\geq 0$ for
		which $X_t = k+1$. Consider the sequence $b_t = \mathbb{E}\left[X_{t\wedge \tau}\,|\, X_0=0\right]$, where
		$t\wedge \tau$ denotes the minimum of $t$ and $\tau$. Note that part (a) implies
		$\mathbb{P}\left( X_{t\wedge\tau} = i \,|\, X_0=0\right) \leq \mathbb{P}\left( X_t = i \,|\, X_0=0\right)
		\leq 2\alpha$ for any $i\in\{1,\dots,k\}$ and further 
		\begin{equation}\label{eq:btPrel}
		\begin{split}
		b_t &= (k+1)\cdot \mathbb{P}\left( X_{t\wedge\tau} = k+1 \,|\, X_0=0\right) + O(k^2\alpha)\\
		&= k+1 - (k+1)\cdot \mathbb{P}\left( X_{t\wedge\tau} = 0 \,|\, X_0=0\right) + O(k^2\alpha).
		\end{split}
		\end{equation}
		With the same reasoning as in part (b), we end up in a similar situation with $b_0=0$ and $b_t$ satisfying
		the recursive formula
		\begin{align*}
		b_{t+1} &= b_t + \alpha\cdot\mathbb{P}\left( X_{t\wedge\tau} = 0 \,|\, X_0=0\right)\\
		&=b_t + \alpha \left( 1 - \frac{b_t}{k+1}\right) + O(k \alpha^2).
		\end{align*}
		Solving it gives $b_t = k+1 - (k+1)\,( 1 - \frac{\alpha}{k+1})^t + O(k^2 \alpha)$ and plugging this into \eqref{eq:btPrel} completes the proof of part (c), using 
		$$\mathbb{P}\left( 1\leq X_{t\wedge\tau}\leq k \,|\, X_0=0\right)=O(k\alpha)$$ and noting that
		$X_{t\wedge\tau}\leq k$ if and only if $X_{t'}\leq k$ for all $0\leq t'\leq t$. 
	\end{enumerate}
\end{proof}

Let us now take a closer look on the optimal separation of states $0$ and $L$ in this chain. To make
our lives easier, we establish three auxiliary results showing that among the non-trivial separating
sequences, there are very simple ones which are essentially best possible as $T$ grows large.

\begin{proposition}\label{prop:boundariesnice}
	Let $L$ be fixed, and let $\alpha=\alpha(T) = \Theta\left(\frac{1}{T}\right)$. Then, for
	sufficiently large $T$, any non-trivial separating sequence $\mathcal{A}=(A_t)_{t=0}^T$
	such that $\mathcal{S}^{\mathcal{A}}_T(0,L)>1$ (if such exist) must satisfy $0\in A_t$ and
	$L\notin A_t$ for all $0\leq t \leq T$.
\end{proposition}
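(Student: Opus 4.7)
The plan is to argue by contradiction: suppose $\mathcal{A}=(A_t)_{t=0}^T$ is non-trivial with $\mathcal{S}^{\mathcal{A}}_T(0,L)>1$ but the stated conclusion fails. The reflection symmetry $i\mapsto L-i$ of the chain, combined with complementation $A_t\mapsto S\setminus A_t$, preserves separation and exchanges the two conditions ``$0\in A_t$ for all $t$'' and ``$L\notin A_t$ for all $t$'', so it suffices to derive a contradiction assuming $0\notin A_s$ for some $s$. Non-triviality ($p_1>0$) immediately forces $0\in A_0$, so in fact $s\geq 1$.

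The basic tool is the Markov-property bound, valid for every fixed $t$:
\[
\mathcal{S}^{\mathcal{A}}_T(0,L)\le \mathbb{P}(X_t\in A_t\mid X_0=0)+\mathbb{P}(X_t\notin A_t\mid X_0=L)=1+\sum_{i\in A_t}\bigl[P^t(0,i)-P^t(L,i)\bigr].
\]
I will look for a $t$ making the right-hand sum negative by a quantity bounded away from $0$ uniformly in $T$. If $L\in A_s$, I would take $t=s$: Lemma \ref{Fact1.3alt}(b) gives the term $i=L$ as $-(1-2\alpha/L)^s+O(L\alpha)$, while Lemma \ref{Fact1.3alt}(a) shows all remaining terms together contribute only $O(L\alpha)$. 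With $\alpha=\Theta(1/T)$ the factor $(1-2\alpha/L)^T$ tends to a strictly positive limit, so $\mathcal{S}^{\mathcal{A}}_T<1$ for large $T$, a contradiction.

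The harder case is $L\notin A_s$: here $A_s\subseteq\{1,\ldots,L-1\}$ and part (a) already yields $p_1\le O(L/T)$, but the direct bound at $t=s$ only gives $1+O(L/T)$. I would split on the behaviour at other times. If $L\in A_{t^*}$ for some $t^*$, reapplying the previous argument at $t^*$ bounds $p_2\le 1-P^{t^*}(L,L)\le \tfrac12+O(L\alpha)$; together with $p_1\le O(L/T)$ this finishes. Otherwise $L\notin A_t$ for every $t$, so $A_t\subseteq\{0,\ldots,L-1\}$ throughout. Setting $t^*=\max\{t:0\in A_t\}$ (well-defined since $0\in A_0$), if $t^*=\Omega(T)$ then part (b) gives $P^{t^*}(L,0)$ bounded below by a positive constant, so the bound at $t^*$ yields $p_2\le 1-P^{t^*}(L,0)$ strictly below $1$, again a contradiction.

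The main obstacle is the remaining sub-case $L\notin A_t$ for all $t$ with $t^*=o(T)$: now $A_t\subseteq\{1,\ldots,L-1\}$ for every $t>t^*$, so any trajectory contributing to $p_1$ must leave $\{0\}$ by time $t^*+1$ and then survive in $\{1,\ldots,L-1\}$ without touching either boundary for the remaining $\gtrsim T$ steps; a gambler's-ruin / spectral-gap estimate for the interior random walk then makes $p_1$ exponentially small in $T/L^2$. A parallel application of Lemma \ref{Fact1.3alt}(c), together with the observation that non-triviality forces $1\in A_{t^*+1}$ (else the chain from $0$ cannot leave $\{0\}$ while staying in $A_\cdot$), should force $p_2\le 1-\Omega(\alpha)$; combined with the doubly-small $p_1$ the total is strictly below $1$ for sufficiently large $T$. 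Executing this final sub-case cleanly is where the technical heart of the argument sits.
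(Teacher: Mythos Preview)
Your overall strategy---reduce by symmetry to the failure of one boundary condition, then run a case analysis---is reasonable, and the first cases are fine. The problem is the final sub-case ($L\notin A_t$ for all $t$ and $t^*:=\max\{t:0\in A_t\}$ small), where your sketch breaks down in two places.

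First, the claim ``non-triviality forces $1\in A_{t^*+1}$'' is false. A trajectory $\mathbf{x}$ from $0$ witnessing $p_1>0$ can leave state $0$ at any time $r\le t^*$ (since $0\in A_r$ for all such $r$); it need not wait until $t^*+1$. So nothing forces $1\in A_{t^*+1}$. Second, even granting that $1\in A_{t_0}$ for some specific $t_0$, this does not yield $p_2\le 1-\Omega(\alpha)$: you would need $P^{t_0}(L,1)\ge\Omega(\alpha)$, but if $t_0<L-1$ the chain from $L$ cannot reach state $1$ at all by time $t_0$, so $P^{t_0}(L,1)=0$. Lemma~\ref{Fact1.3alt}(c) gives no lower bound here. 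Your $\Omega(T)/o(T)$ dichotomy on $t^*$ is also awkward, since $t^*$ depends on $\mathcal{A}$; you really mean a fixed threshold $cT$, and the constant has to be chosen before the case split.

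The paper sidesteps this entire morass with one clean idea. After the same symmetry reduction (so $0\in A_t$ for all $t$, and we must rule out $L\in A_{t_1}$), non-triviality of $p_2$ furnishes a trajectory $\mathbf{y}$ from $L$ with $y_t\notin A_t$ throughout. The key observation is that because this is a birth-and-death chain, any trajectory of $\mathbf{X}$ from $0$ that reaches $L$ must cross or meet $\mathbf{y}$; a one-step time shift $\mathbf{X}'$ (defined to equal $(X_{t+1})$ on $\{X_1=0\}$) handles the parity issue, giving
\[
\mathbb{P}(X_t=L\text{ for some }t\mid X_0=0)\le \alpha+2\bigl(1-p_1\bigr).
\]
Since the left side is bounded away from $0$ by Lemma~\ref{Fact1.3alt}(c), this yields $p_1\le 1-\epsilon$ uniformly, and combined with $p_2\le 2L\alpha$ one is done. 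Note that your troublesome sub-case 2b is exactly the mirror image of the paper's main case under $i\mapsto L-i$, $A_t\mapsto S\setminus A_t$: the same crossing argument (applied to $\mathbf{Y}$ from $L$ and the witness $\mathbf{x}$ from $0$) gives $p_2\le 1-\epsilon$ directly, which together with your bound $p_1=O(L\alpha)$ from time $s$ finishes immediately---no spectral gap, no threshold on $t^*$, and no need to locate any particular state in $A_{t^*+1}$.
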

\begin{proof} By \eqref{eq:P00LLalt}, we know that $P^t(0, 0) = P^t(L, L) > 1/2$ for any
	$0\leq t \leq T$ given $T$ sufficiently large. Hence if $0 \not\in A_{t_1}$ and $L\in A_{t_2}$
	for some $0\leq t_1, t_2 \leq L$, then
	$$ \mathcal{S}^{\mathcal{A}}_T(0,L) \leq 1-P^{t_1}(0, 0) + 1 - P^{t_2}(L, L) < 1.$$
	So for $T$ sufficiently large, either $0\in A_t$ for all $0\leq t \leq T$ or $L\not\in A_t$
	for all $0\leq t \leq T$.
	
	By symmetry, we can assume without loss of generality, that $0 \in A_t$ for all $0\leq t \leq T$.
	Note that if there is some $t_1$ such that $L \in A_{t_1}$, then by part (a) of Lemma \ref{Fact1.3alt},
	we get
	$$\mathbb{P}\left( X_t\not\in A_t\text{ for all }0\leq t\leq T\,|\, X_0=L\right) 
	\leq \mathbb{P}\left( 0 < X_{t_1} < L \,|\, X_0=L\right) \leq 2L\alpha.$$
	So the proposition follows if we can show that there exists a constant $\epsilon>0$ such that
	for $T$ sufficiently large any non-trivial separating sequence satisfies
	$$ \mathbb{P}\left(X_t \in A_t\text{ for all }0\leq t \leq T\,|\, X_0=0\right) \leq 1-\epsilon.$$
	
	First note that since $\mathcal{A}$ is non-trivial, there exists a trajectory
	$\mathbf{y}\in \{L\}\times S^T$ such that $y_t\not\in A_t$ for all $0\leq t \leq T$ and
	$\Prob(\mathbf{X}=\mathbf{y}\,|\,X_0=L)>0$. Further, recall that the chain can only attain $\mathbf{y}$
	with positive probability if $|y_{t+1}-y_t|\leq 1$ for all $0\leq t\leq T-1$.

	Next, let $\mathbf{X}=(X_t)_{t=0}^{T+1}$ be a copy of the Markov chain started in $X_0=0$.
	We define the process $\mathbf{X}'=(X'_t)_{t=0}^T$ as $X'_t = X_{t+1}$ for all $0\leq t\leq T$ if
	$X_1=0$, and otherwise put $X'_0=0$ and let this process evolve independently of $\mathbf{X}$. Clearly,
	this implies that $(X_t)_{t=0}^{T}$ and $\mathbf{X}'$ have the same distribution.

	Now, if $X_{t} = L$ for some $0\leq t \leq T$, then the trajectory of $\mathbf{X}$ and $\mathbf{y}$
	necessarily either intersect or cross. Consequently, we either have $X_1\neq 0$ (which occurs with probability
	$\alpha$) or at least one of $\mathbf{X}$ and $\mathbf{X}'$ meets $\mathbf{y}$, and is thus outside $A_t$ for
	some $t$. By the union bound, we find
	\begin{equation*}
	\begin{split}
	&\mathbb{P}\left( X_t = L\text{ for some }0\leq t \leq T\,|\, X_0=0\right)\\
	&\qquad\leq \alpha + 2\,\mathbb{P}\left( X_t \not\in A_t\text{ for some }0\leq t\leq T\,|\, X_0=0 \right).
	\end{split}
	\end{equation*}
	From \eqref{eq:keepsmallalt} we know that $\Prob(X_t\leq L-1\text{ for all }0\leq t \leq T\,|\, X_0=0)$
	is bounded away from $1$ as $T$ tends to infinity with our choice of $\alpha=\Theta(\tfrac1T)$. So choosing
	$\epsilon>0$ small, $T$ large enough such that
	$$\mathbb{P}\left( X_t = L\text{ for some }0\leq t \leq T\,|\, X_0=0\right)\geq \alpha +2\epsilon$$
	will do the job.
\end{proof}

\begin{proposition}\label{prop:cyclicperm}
	Let $\mathcal{A}=(A_t)_{t=0}^T$ be any separating sequence such that $0 \in A_t$ and $L\not\in A_t$
	for all $0\leq t \leq T$. For any $0\leq a \leq T$, we define the separating sequence $\mathcal{A}^a
	=(A_t^a)_{t=0}^T$ by $A^a_t:= A_{t+a\;(\text{\upshape mod }T+1)}$. Then
	$$ \mathcal{S}^{\mathcal{A}}_T(0,L) \leq \mathcal{S}^{\mathcal{A}^a}_T(0,L)+12\,L\alpha.$$
\end{proposition}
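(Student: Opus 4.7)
My plan is to decompose both separations $\mathcal{S}^{\mathcal{A}}_T(0,L)$ and $\mathcal{S}^{\mathcal{A}^a}_T(0,L)$ into products of conditional probabilities via the Markov property, reduce them to a common form using Lemma \ref{Fact1.3alt}(a) to absorb interior contributions, and then bound the resulting difference via reversibility and the $0\leftrightarrow L$ symmetry of the chain.

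First I would split $p_0=\mathbb{P}_0(X_t\in A_t\ \forall t)$ at time $a$ by conditioning on $X_a$. The $z=L$ contribution vanishes since $L\notin A_a$, and by Lemma \ref{Fact1.3alt}(a) the interior contributions sum to $O(L\alpha)$, so $p_0=Q_0\cdot P_0+O(L\alpha)$ with $Q_0:=\mathbb{P}_0(X_t\in A_t,\ 0\leq t\leq a-1,\ X_a=0)$ and $P_0:=\mathbb{P}(X_t\in A_t,\ a\leq t\leq T\,|\,X_a=0)$. A symmetric split of $p_L$ at time $a$ (in which the $z=0$ term vanishes because $0\in A_a$ and we require $X_a\notin A_a$) yields $p_L=Q_L\cdot P_L+O(L\alpha)$. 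For $\tilde p_0$, I would split at the ``seam'' time $T-a$ where $A^a_{T-a}=A_T$; the $w=L$ term is killed by $L\notin A_T$, the interior contribution is again $O(L\alpha)$, and time-homogeneity applied to the two halves gives $\tilde p_0=R_0\cdot S_0+O(L\alpha)$ with $R_0:=\mathbb{P}(X_s\in A_s,\ a\leq s\leq T,\ X_T=0\,|\,X_a=0)$ and $S_0:=\mathbb{P}(X_s\in A_{s-1},\ 1\leq s\leq a\,|\,X_0=0)$. Integrating the endpoint $X_T$ out of $P_0$ and again using $L\notin A_T$ together with Lemma \ref{Fact1.3alt}(a) shows $P_0=R_0+O(L\alpha)$, and analogously $P_L=R_L+O(L\alpha)$ and $\tilde p_L=R_L\cdot S_L+O(L\alpha)$.

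Combining all four decompositions yields
$$\mathcal{S}^{\mathcal{A}}_T(0,L)-\mathcal{S}^{\mathcal{A}^a}_T(0,L)=R_0(Q_0-S_0)+R_L(Q_L-S_L)+O(L\alpha),$$
with the explicit $O(L\alpha)$ here bookkeeping all the interior/boundary errors: four decompositions and two $P_i\to R_i$ conversions, each contributing at most $2L\alpha$ from Lemma \ref{Fact1.3alt}(a), for a total of $12L\alpha$.

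The principal obstacle—and where the real content of the proposition lies—is proving the residual inequality $R_0(Q_0-S_0)+R_L(Q_L-S_L)\leq O(L\alpha)$. Reversibility of the birth-and-death chain rewrites $Q_0$ as $\mathbb{P}(X_s\in A_{a-s},\ 1\leq s\leq a-1,\ X_a=0\,|\,X_0=0)$, a time-reversed path probability through the sets $A_1,\ldots,A_{a-1}$ returning to $0$, and the $0\leftrightarrow L$ symmetry ($A_t\mapsto S\setminus A_t$ swaps the roles of $0$ and $L$) gives the analogous formula for $Q_L$. I expect to conclude by pairing the $Q_0$-contribution against $S_L$ (and $Q_L$ against $S_0$) rather than trying to match each $Q$ directly to the corresponding $S$, and then using Lemma \ref{Fact1.3alt}(a) once more to bound the residual discrepancy coming from the chain's intermediate visits to interior states. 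The hardest step will be making this ``pairing'' precise: the reversed set-orderings in $Q_0,Q_L$ differ from the forward orderings in $S_0,S_L$, and it is only after combining with the $0\leftrightarrow L$ symmetry that the two sums become comparable up to $O(L\alpha)$.
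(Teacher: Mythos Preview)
Your decomposition is correct and already contains the paper's idea: both $p_0$ and $\tilde p_0$ share the factor $R_0$ up to $O(L\alpha)$, and similarly on the $L$-side. Where you go astray is in the treatment of the residual $Q_0-S_0$. The plan via reversibility and ``pairing $Q_0$ against $S_L$'' is a red herring: time-reversal sends $Q_0$ to a path probability through $A_{a-1},A_{a-2},\ldots,A_1$, while $S_L$ involves the complements $A_0^{\text c},\ldots,A_{a-1}^{\text c}$ in forward order, and the $0\leftrightarrow L$ reflection does not convert one into the other. No amount of such pairing will line the set-indices up.

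In fact the residual is much easier than you suggest and needs neither reversibility nor symmetry. Dropping the endpoint constraint $X_a=0$ gives
\[
Q_0\;\leq\;\mathbb{P}\bigl(X_t\in A_t,\ 1\le t\le a-1\,\big|\,X_0=0\bigr).
\]
On the other hand, conditioning $S_0$ on the first step and using $0\in A_0$ yields
\[
S_0\;\geq\;(1-\alpha)\cdot\mathbb{P}\bigl(X_s\in A_{s-1},\ 2\le s\le a\,\big|\,X_1=0\bigr)
=(1-\alpha)\cdot\mathbb{P}\bigl(X_t\in A_t,\ 1\le t\le a-1\,\big|\,X_0=0\bigr),
\]
the last equality by time-homogeneity. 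Hence $Q_0-S_0\le \alpha\,Q_0\le\alpha$, and by symmetry $Q_L-S_L\le\alpha$, so the residual is at most $2\alpha$, well inside your $O(L\alpha)$ budget.

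The paper executes exactly this, but more economically: instead of splitting once and then repairing the mismatch, it pins the trajectory at \emph{three} times ($X_{a-1}=X_a=X_T=0$, costing $3\cdot 2L\alpha=6L\alpha$), splits, time-shifts the two pieces, and observes that they reassemble \emph{exactly} into the sub-event $\{X_t\in A^a_t\ \forall t,\ X_{T-a}=X_{T-a+1}=X_T=0\}$ of $\{X_t\in A^a_t\ \forall t\}$. No separate comparison of $Q_0$ with $S_0$ is ever needed; the extra pin at $X_{a-1}=0$ is precisely what absorbs the one-step offset you are struggling with.
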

\begin{proof} The case where $a=0$ is obvious, so we may assume $a>0$. By part (a) of Lemma \ref{Fact1.3alt},
	for any fixed $0\leq t\leq T$, the probability that $X_t \in A_t\setminus \{0\}$ given $X_0=0$ is at
	most $2\,L\alpha$. From this we can infer
	\begin{align*}
	&\mathbb{P}\left( X_t \in A_t\text{ for all }0\leq t\leq T \,|\, X_0=0\right) \\
	&\quad\leq\mathbb{P}\left( X_t \in A_t\text{ for all }0\leq t\leq T\text{ and }
	X_{a-1}=X_a=X_T=0 \,|\, X_0=0\right)+6\,L\alpha\\
	&\quad= \mathbb{P}\left( X_t \in A_t\text{ for all }0\leq t\leq a-1 \text{ and }
	X_{a-1}=0 \,|\, X_0=0\right)\cdot(1-\alpha)\\
	&\quad\qquad\cdot \mathbb{P}\left( X_t \in A_t\text{ for all }a\leq t\leq T \text{ and }
	X_{T}=0 \,|\, X_a=0\right) +6\,L\alpha\\
	&\quad= \mathbb{P}\left( X_t \in A_{t-T+a-1}\text{ for all }T-a+1\leq t\leq T \text{ and }
	X_T=0 \,|\, X_{T-a+1}=0\right)\\
	&\quad\qquad\cdot(1-\alpha)\cdot \mathbb{P}\left( X_t \in A_{t+a}\text{ for all }0\leq t\leq T-a
	\text{ and }X_{T-a}=0 \,|\, X_0=0\right)\\
	&\quad\qquad+ 6\,L\alpha\\
	&\quad=\mathbb{P}\left( X_t \in A^a_t\text{ for all }0\leq t\leq T\text{ and }
	X_{T-a}=X_{T-a+1}=X_T = 0\,|\, X_0=0\right)\\
	&\quad\qquad+ 6\,L\alpha\\
	&\quad\leq \mathbb{P}\left( X_t \in A^a_t\text{ for all }0\leq t\leq T \,|\, X_0=0\right) + 6\,L\alpha,
	\end{align*}
	where the first and last equality follow from the Markov property and the central one from
	time homogeneity of the chain.
	
	By symmetry, the same argument works for the chain started at $L$ and the sequence of complementary
	sets $(S\setminus A_t)_{t=0}^T$.
\end{proof}

\begin{proposition}\label{prop:constantalmostbest}
	For any separating sequence $\mathcal{A}=(A_t)_{t=0}^T$ such that $0\in A_t$ and $L\not\in A_t$
	for all $0\leq t \leq T$, there exists a $k\in\{0,\dots,L\}$ such that
	\begin{equation}
	\mathcal{S}_T^{\mathcal{A}}(0,L) \leq \mathcal{S}_T^{\underline{\mathbf{k}}}(0,L) + 12\,L \alpha,
	\end{equation}
	where $\underline{\mathbf{k}}$ denotes the constant separating sequence whose elements are all equal to the
	set $\{0, 1, \dots, k\}$.
\end{proposition}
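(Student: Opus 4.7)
The plan is to reduce the general separating sequence $\mathcal{A}$ to a constant interval-type sequence $\underline{\mathbf{k}}$ in two stages, each contributing at most $O(L\alpha)$ to the separation error.

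\textbf{Stage 1: Reduction to interval-type sequences.} For each $t$, I would define $k_t := \max\{k \geq 0 : \{0, 1, \ldots, k\} \subseteq A_t\}$; by the assumptions $0 \in A_t$ and $L \notin A_t$, we have $0 \leq k_t \leq L-1$. The goal is to show
\[
\mathcal{S}_T^{\mathcal{A}}(0, L) \leq \mathcal{S}_T^{(\{0,\ldots,k_t\})_{t=0}^T}(0, L) + O(L\alpha).
\]
The key input is Lemma \ref{Fact1.3alt}(a), which bounds by $2\alpha$ the probability of being at any intermediate state at a single time. These single-time estimates are lifted to trajectory-level bounds by exploiting the sparsity of excursions: the chain from an endpoint makes $O(T\alpha)$ excursions, and non-interval behaviour of $A_t$ affects the separation only during such excursions. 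Conditioning on the chain returning to its endpoint at a few well-chosen times -- in the style of the proof of Proposition \ref{prop:cyclicperm} -- yields the needed $O(L\alpha)$ bound.

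\textbf{Stage 2: Reduction to a constant interval.} Given an interval-type sequence $(k_t)$, I would pick a suitable $k^* \in \{0, \ldots, L\}$ as follows. Extending Lemma \ref{Fact1.3alt}(c) to varying thresholds gives
\[
\mathbb{P}(X_t \leq k_t \text{ for all } t \mid X_0 = 0) = \prod_{t=0}^T \Bigl(1 - \tfrac{\alpha}{k_t+1}\Bigr) + O(L\alpha),
\]
with a symmetric expression for the chain from $L$ involving $L - k_t$ in place of $k_t + 1$. Setting $\bar a := \frac{1}{T+1}\sum_t \frac{1}{k_t+1}$ and $\bar b := \frac{1}{T+1}\sum_t \frac{1}{L - k_t}$, Jensen's inequality applied to $\log$ (equivalently, AM-GM) gives $\prod_t (1 - \alpha/(k_t+1)) \leq (1 - \alpha \bar a)^{T+1}$ and likewise $\prod_t (1 - \alpha/(L-k_t)) \leq (1 - \alpha \bar b)^{T+1}$. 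Moreover, since $x \mapsto 1/x$ is convex and $(k_t+1)+(L-k_t)=L+1$ for each $t$, Jensen yields $1/\bar a + 1/\bar b \leq L+1$. This inequality guarantees the existence of a $k^*$ satisfying both $1/(k^*+1) \leq \bar a$ and $1/(L-k^*) \leq \bar b$, for which both terms of $\mathcal{S}_T^{\underline{\mathbf{k^*}}}$ dominate the corresponding terms of $\mathcal{S}_T^{(k_t)}$ up to the $O(L\alpha)$ approximation error. A small amount of extra care handles the rounding so that $k^* \in \{0, \ldots, L\}$; if needed, Proposition \ref{prop:cyclicperm} can absorb a single unit adjustment.

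I expect Stage 1 to be the main obstacle. A naive union bound over the $T+1$ times yields an error $(T+1)\cdot 2L\alpha = O(L)$ in the regime $\alpha = \Theta(1/T)$, which is useless. The improvement to $O(L\alpha)$ is possible only because non-interval deviations of $A_t$ affect the event under consideration solely during the $O(T\alpha) = O(1)$ excursions of the chain into the interior. Exploiting this sparsity, by a careful trajectory-splitting argument analogous to the one used in Proposition \ref{prop:cyclicperm}, is the technical heart of the proof.
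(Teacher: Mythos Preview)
Your approach is genuinely different from the paper's, and both stages contain real gaps.

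The most concrete problem is the product formula in Stage~2. The claimed identity
\[
\mathbb{P}\bigl(X_t \leq k_t \text{ for all } 0\leq t\leq T \,\big|\, X_0 = 0\bigr) \;=\; \prod_{t=0}^T \Bigl(1 - \tfrac{\alpha}{k_t+1}\Bigr) + O(L\alpha)
\]
is false for time-varying thresholds. Take $L\geq 2$ and let $k_t=0$ for even $t$, $k_t=L-1$ for odd $t$. The constraint then reduces to $X_t=0$ at all even times (the odd constraint is automatic since $\lvert X_{t+1}-X_t\rvert\leq 1$). A direct two-step computation gives $\mathbb{P}(X_{t+2}=0\mid X_t=0)=(1-\alpha)^2+\tfrac{\alpha}{2}=1-\tfrac{3\alpha}{2}+O(\alpha^2)$, so the true probability is $e^{-3\alpha T/4}+o(1)$. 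The product, on the other hand, is $(1-\alpha)^{T/2}(1-\alpha/L)^{T/2}=e^{-\alpha T/2}+o(1)$ for $L$ large. With $\alpha T=\Theta(1)$ these differ by a positive constant, not by $O(L\alpha)=O(1/T)$. Lemma~\ref{Fact1.3alt}(c) does not extend in this way: its proof relies on the hitting time $\tau$ of a \emph{fixed} level $k+1$, and the recursion for $b_t$ breaks down once the barrier moves. Even if you only need the upper-bound direction (which would suffice for your chain of inequalities), that is a separate statement requiring its own proof, and you have not supplied one.

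Stage~1 is also not under control. You need $\mathbb{P}(X_t\in A_t\text{ for all }t,\ X_s>k_s\text{ for some }s\mid X_0=0)=O(L\alpha)$, and you correctly note that a union bound over times gives only $O(L)$. Your proposed remedy---``exploiting sparsity of excursions'' via a trajectory-splitting argument in the style of Proposition~\ref{prop:cyclicperm}---is not an argument but a hope; Proposition~\ref{prop:cyclicperm} conditions on the chain being at an endpoint at \emph{three specific times}, which costs $O(L\alpha)$ by Lemma~\ref{Fact1.3alt}(a), whereas your event involves the entire trajectory of an excursion interacting with a moving set.

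The paper avoids both difficulties by a completely different route. It never reduces to interval-type sequences at all. Instead it applies Proposition~\ref{prop:cyclicperm} to pass to the \emph{average} $\frac{1}{T+1}\sum_a \mathcal{S}^{\mathcal{A}^a}_T(0,L)$ over all cyclic shifts, at cost $12L\alpha$. For this averaged quantity one conditions on $M=\max_t X_t=k$ and, for each $k$, picks $k'\leq k$ minimising the density $f(k)=\min_{k'\leq k}\lvert\{t:k'\in A_t\}\rvert/(T+1)$; averaging over the random hitting time of $k'$ against the cyclic shift shows the conditional contribution is at most $f(k)$. Summation by parts then expresses the bound as a convex combination $\sum_k (f(k)-f(k+1))\,\mathcal{S}^{\underline{\mathbf{k}}}_T(0,L)$, and one simply picks the best $k$. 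No product formula, no Jensen, no interval reduction.
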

\begin{proof}
	By Proposition \ref{prop:cyclicperm}, we have
	\begin{align*}
	\mathcal{S}_T^{\mathcal{A}}(0,L) \leq \frac{1}{T+1}\,\sum_{a=0}^T \mathcal{S}^{\mathcal{A}^a}_T(0,L)
	+12\,L\alpha.
	\end{align*}
	For any $0\leq k \leq L$, let us define
	$$f(k) := \min_{0\leq k' \leq k}\frac{\abs{ \{t : k' \in A_t\}}}{T+1}.$$
	Note that the function $f$ is decreasing, $f(0)=1$ and $f(L)=0$. To simplify the notation, we additionally
	set $f(L+1) := 0$ and write $M:=\max_{0\leq t\leq T} X_t$. Considering only the first summands coming from
	each of the separation terms $\mathcal{S}^{\mathcal{A}^a}_T(0,L),\ 0\leq a\leq T$, we find
	\begin{align*}
	&\frac{1}{T+1}\, \sum_{a=0}^T\mathbb{P}\left( X_t \in A^a_t \text{ for all }0\leq t\leq T \,
	\middle\vert\, X_0=0\right)\\
	&\qquad = \frac{1}{T+1}\, \sum_{a=0}^T \sum_{k=0}^L \mathbb{P}\left( M = k \,
	\middle\vert\, X_0=0\right)\\
	&\qquad\qquad\cdot  \mathbb{P}\left( X_t \in A^a_t \text{ for all }0\leq t\leq T
	\,\middle\vert\,M = k \text{ and } X_0=0\right)
	\end{align*}
	
	Fix $k\in\{0,\dots,L\}$, pick $k'$ to minimize $\abs{\{t: k'\in A_t\}}$ over $\{0,\dots,k\}$ and note
	that this implies
	$$f(k)=\frac{\abs{\{t: k'\in A_t\}}}{T+1}=	\frac{1}{T+1}\,\sum_{a=0}^T \mathbbm{1}_{\{k'\in A_t^a\}}.$$
	Let $\tau\geq 0$ be the first time when the Markov chain visits state $k'$. Given $M=k$, we know
	$\tau\leq T$, hence
	\begin{align*}
	&\frac{1}{T+1}\, \sum_{a=0}^T \mathbb{P}\left( X_t \in A^a_t \text{ for all }0\leq t\leq T \,
	\middle\vert\, M = k \text{ and } X_0=0\right)\\
	&\qquad \leq \frac{1}{T+1}\, \sum_{a=0}^T \mathbb{P}\left( k' \in A^a_\tau \,
	\middle\vert\,M = k \text{ and } X_0=0\right)\\
	&\qquad = \frac{1}{T+1}\, \sum_{a=0}^T\sum_{t=0}^T \mathbb{P}\left( \tau=t \,
	\middle\vert\,M = k \text{ and } X_0=0\right)\cdot \mathbbm{1}_{\{k'\in A_t^a\}}\\
	&\qquad=\sum_{t=0}^T \mathbb{P}\left( \tau=t \,\middle\vert\, M = k \text{ and } X_0=0\right)
	\cdot f(k)= f(k).
	\end{align*}
	We conclude that
	\begin{align*}
	&\frac{1}{T+1} \sum_{a=0}^T\mathbb{P}\left( X_t \in A^a_t \text{ for all }0\leq t\leq T \,
	\middle\vert\, X_0=0\right)\\
	&\qquad \leq \sum_{k=0}^L \mathbb{P}\left( \max_{0\leq t\leq T} X_t = k \,
	\middle\vert\, X_0=0\right) \cdot f(k)\\
	&\qquad = \sum_{k=0}^L \mathbb{P}\left( \max_{0\leq t\leq T} X_t \leq k \,
	\middle\vert\, X_0=0\right) \cdot\big( f(k) - f(k+1)\big).
	\end{align*}
	
	Arguing analogously in the case of $X_0=L$, with the modifications that we consider
	$\min_{0\leq t\leq T} X_t$ instead of $\max_{0\leq t\leq T} X_t$ and $\tau$ now denotes the first
	time when the chain is in state $k$, yields
	\begin{align*}
	&\frac{1}{T+1} \sum_{a=0}^T\mathbb{P}\left( X_t \not\in A^a_t \text{ for all }0\leq t\leq T \,
	\middle\vert\, X_0=L\right)\\
	&\qquad \leq \sum_{k=0}^L \mathbb{P}\left( \min_{0\leq t\leq T} X_t = k \,\middle\vert\, X_0=L\right)
	\cdot \big( 1- f(k)\big)\\
	&\qquad = 1-\sum_{k=0}^L \mathbb{P}\left( \min_{0\leq t\leq T} X_t \leq k \,\middle\vert\, X_0=L\right) 
	\cdot \big( f(k) - f(k+1)\big).\\
	&\qquad = \sum_{k=0}^L \mathbb{P}\left( \min_{0\leq t\leq T} X_t > k \,\middle\vert\, X_0=L\right) 
	\cdot \big( f(k) - f(k+1)\big),
	\end{align*}
	where we used $1-f(k)\geq 1-\tfrac{1}{T+1}\cdot\abs{\{t: k\in A_t\}}$ to derive the inequality and the last
	equality follows from $\sum_{k=0}^L f(k) - f(k+1) = 1$.	By combining these two estimates, it follows that
	\begin{align*}
	\frac{1}{T+1}\sum_{a=0}^T \mathcal{S}_T^{\mathcal{A}^a}(0,L) \leq 
	\sum_{k=0}^L \big( f(k)-f(k+1) \big)\cdot\mathcal{S}^{\underline{\mathbf{k}}}_T(0,L).
	\end{align*}
	From the fact that the coefficients $f(k) - f(k+1),\ 0\leq k\leq L,$ sum up to $1$, plus $f(L) = f(L+1)$,
	we can conclude that there exists some $k\in\{0,\dots,L-1\}$ such that
	\begin{equation*}
	\frac{1}{T+1}\sum_{a=0}^T \mathcal{S}_T^{\mathcal{A}^a}(0,L) \leq \mathcal{S}^{\underline{\mathbf{k}}}_T(0,L),
	\end{equation*}
	which completes the proof.
\end{proof}
\vspace*{1em}

Combining Propositions \ref{prop:boundariesnice} and \ref{prop:constantalmostbest}, it follows that for any fixed $L\geq 1$, any $\alpha = \alpha(T) = \Theta\left(\frac{1}{T}\right)$ and $T$ sufficiently large, the optimal separation $\mathcal{S}_T(0,L)$ is the maximum of $1$ and
\begin{equation}\label{eq:bestnontrivial}
\mathcal{S}^{\underline{\mathbf{k}}}_T(0,L) + O(L\alpha) \leq
e^{-\alpha T / (k+1)} + e^{-\alpha T/(L-k)} + O(L\alpha),
\end{equation}
for $0 \leq k < L$, where the inequality follows from \eqref{eq:keepsmallalt}.

To finish the proof of Theorem \ref{lowerboundthm}, we need one more elementary estimation:
\begin{lemma}\label{cd}
	Given $A > 0$, define the function
	$$f_A(x) := e^{-A/x} + e^{-A/(1-x)},\quad x\in (0, 1).$$
	Then it holds $$\sup_{0<x<1}f_A(x)= \max\left( e^{-A}, 2\, e^{-2A}\right).$$
\end{lemma}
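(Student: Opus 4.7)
The plan is to reduce the problem via the substitution $s = x/(1-x)$, which maps $(0,1)$ bijectively onto $(0,\infty)$. An immediate computation gives $f_A(x) = e^{-A}\cdot g(s)$, where $g(s) := e^{-As} + e^{-A/s}$. Since $g(s) = g(1/s)$, it suffices to work on $[1,\infty)$; note $g(1) = 2e^{-A}$ and $\lim_{s\to\infty}g(s) = 1$. The claim will follow once I show that $\sup_{s\geq 1} g(s) = \max(2e^{-A},1)$, since then multiplying by $e^{-A}$ yields $\max(e^{-A},2e^{-2A})$.

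Next I would locate the critical points of $g$ on $[1,\infty)$. Differentiating gives $g'(s) = -A e^{-As} + (A/s^2) e^{-A/s}$, and one checks that $g'(s) > 0$ is equivalent to $\phi(s) > 0$, where $\phi(s) := A(s - 1/s) - 2\ln s$. Clearly $\phi(1) = 0$ and, by the AM--GM inequality,
$$\phi'(s) = A\bigl(1 + \tfrac{1}{s^2}\bigr) - \tfrac{2}{s} \geq \tfrac{2A}{s} - \tfrac{2}{s} = \tfrac{2(A-1)}{s}.$$
This suggests splitting into the cases $A \geq 1$ and $A < 1$.

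If $A \geq 1$, the display above shows $\phi' \geq 0$, so $\phi$ is non-decreasing on $[1,\infty)$ with $\phi(1)=0$; hence $g$ is non-decreasing on $[1,\infty)$ and $\sup_{s\geq 1} g(s) = \lim_{s\to\infty} g(s) = 1$. Since $A \geq 1 > \ln 2$ forces $2e^{-A} < 1$, this matches $\max(2e^{-A},1)$. If $A < 1$, then $\phi'(1) = 2(A-1) < 0$ and $\phi'(s) \to +\infty$; moreover $\phi''(s) = (2/s^2)(1 - A/s) > 0$ on $[1,\infty)$, so $\phi'$ is strictly increasing and has a unique zero $s_0 > 1$. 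Thus $\phi$ is strictly decreasing on $[1,s_0]$ and strictly increasing on $[s_0,\infty)$, starting at $\phi(1)=0$, dipping below $0$, then rising to $+\infty$; so $\phi$ has a unique additional zero $s_1 > s_0$, with $\phi < 0$ on $(1,s_1)$ and $\phi > 0$ on $(s_1,\infty)$. Consequently $g$ strictly decreases on $[1,s_1]$ from $g(1) = 2e^{-A}$ to $g(s_1)$, then strictly increases toward $1$, so $\sup_{s\geq 1} g(s) = \max(g(1),1) = \max(2e^{-A},1)$, as required.

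The main obstacle will be the sub-threshold case $A < 1$, where $g$ genuinely has three critical points on $(0,\infty)$ and one must rule out the possibility that the interior minimum spoils the supremum. The substitution $s = x/(1-x)$ is precisely what makes this tractable: it exposes the involution $s \leftrightarrow 1/s$, reduces the problem to a single variable, and turns the critical-point equation into the clean form $\phi(s) = 0$, whose structure is accessible through elementary calculus.
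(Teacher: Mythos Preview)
Your proof is correct, with one minor slip: $\phi'(s) = A(1+1/s^2) - 2/s$ tends to $A$, not to $+\infty$, as $s\to\infty$. This does not affect the argument, since $A>0$ is all you need for the strictly increasing $\phi'$ to change sign and hence have a unique zero on $(1,\infty)$.

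Your route differs genuinely from the paper's. The paper works directly with $f_A$ on $(0,1)$: it computes $f_A''$, observes that at any stationary point the sign of $f_A''$ coincides with the sign of the strictly convex quadratic $g_A(x) = 2(A+1)x^2 - 2(A+1)x + A$, and argues by contradiction that two interior local maxima would sandwich a local minimum, forcing a sign pattern for $g_A$ incompatible with strict convexity. Hence $f_A$ has at most one interior local maximum, which by the symmetry $x\leftrightarrow 1-x$ must sit at $x=\tfrac12$; the supremum is therefore either $f_A(\tfrac12)=2e^{-2A}$ or the boundary limit $e^{-A}$. Your substitution $s=x/(1-x)$ instead factors out $e^{-A}$ and reduces the problem to the sign of $\phi(s)=A(s-1/s)-2\ln s$, which you analyse directly via $\phi''$. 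The paper's argument is slicker---one convexity observation and a contradiction---but yours is more explicit: it actually locates the threshold $A=1$ and describes exactly how the shape of $g$ changes across it, whereas the paper never distinguishes cases. Both proofs ultimately exploit the same symmetry, just in different coordinates.
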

\begin{proof} 
	First note that the function $f_A$ lies in $C^\infty((0,1))$ and is symmetric around $x=\frac12$.
	Calculating its first two derivatives gives
	\begin{align*}
	f_A'(x)&=\tfrac{A}{x^2}\cdot \text{e}^{-\tfrac{A}{x}}-\tfrac{A}{(1-x)^2}\cdot \text{e}^{-\tfrac{A}{1-x}}
	\quad\text{and}\\
	f_A''(x)&=\big(\tfrac{A^2}{x^4}-\tfrac{2A}{x^3}\big)\cdot \text{e}^{-\tfrac{A}{x}}
	+\big(\tfrac{A^2}{(1-x)^4}-\tfrac{2A}{(1-x)^3}\big)\cdot \text{e}^{-\tfrac{A}{(1-x)}}.
	\end{align*}
	%
	If $x\in(0,1)$ is a stationary point of $f_A$, we necessarily have 
	$$\tfrac{A}{x^2}\cdot \text{e}^{-\tfrac{A}{x}}=\tfrac{A}{(1-x)^2}\cdot \text{e}^{-\tfrac{A}{1-x}}$$ and
	as a consequence the sign of $f_A''(x)$ is given by the sign of 
	$$g_A(x):=2\,(A+1)\,x^2-2\,(A+1)\,x+A.$$
	Due to $A>0$, the function $g_A$ is strictly convex. Assuming the existence of two local maxima
	of $f_A$ on $(0,1)$ -- at points $x_1<x_2$ say -- forces the existence of a local minimum at $x_3\in(x_1,x_2)$.
	Hence $\max\{f_A''(x_1),f_A''(x_2)\}\leq0\leq f_A''(x_3)$, which contradicts the strict convexity
	of $g_A$.
	
	Consequently, $f_A$ can have at most one local maximum in $(0,1)$, which then lies at $x=\tfrac12$
	for symmetry reasons. In conclusion, $f_A$ either attains its maximum on $(0,1)$ at
	$x=\frac{1}{2}$ or converges to its supremum on the boundary.
\end{proof}
\vspace*{1em}

\begin{proof}[of Theorem \ref{lowerboundthm}]
	Applying Lemma \ref{cd} to \eqref{eq:bestnontrivial} with $A=\alpha T/(L+1)$, we see that we can pick
	$\delta>0$ arbitrarily small, choose
	$$\alpha=\alpha(T)= \frac12\,\big(\ln(2)+\delta\big)\cdot\frac{L+1}{T}$$ 
	and find that the optimal separation of the states $0$ and $L$ in the chain $\mathbf{X}$ on $[0,T]$ is $1$
	if $T$ is chosen sufficiently large (after having fixed $L$). Hence, by Theorem \ref{thm:strongduality} and 
	\eqref{eq:TValt} we know that the chain $(X_t)_{t=0}^T$ segregates $0$ and $L$. From \eqref{eq:TValt} we can
	further read off that for $L$ fixed
	\begin{equation}\label{TVlimit}
	\lVert P^T(0,.)- P^T(L,.)\rVert_{\mathrm{TV}}=\text{e}^{-\big(\ln(2)+\delta\big)\,\tfrac{L+1}{L}}-o_T(1).
	\end{equation}
	Given $\epsilon>0$, we can choose $\delta>0$ small, $L$ large enough and then pick $T$ sufficiently large
	to make the right-hand side of \eqref{TVlimit} larger than $\frac12-\epsilon$. This completes the proof.
\end{proof}

\begin{remark}
	One downside of the implicit construction proving Theorem \ref{lowerboundthm} is the fact that it does
	not give much information about the coupling involved. As the coupling will have to take into account
	the whole trajectories of the two individual copies, it is highly unlikely that the coupled process
	will have the Markov property. In this respect, it is still an open problem if the value of $\frac{1}{\mathrm{e}}$,
	established in Section \ref{exampl}, can be pushed further (as supremum of achievable total variation
	distances that can be retained in segregating Markov chains), if we restrict ourselves to {\em Markovian}
	couplings.
	
	We can however rule out that there exists a single chain {\em in discrete time} with two segregated states $x$ and
	$y$ such that $$\lim_{n\to\infty}\lVert P^n(x,.)-P^n(y,.)\rVert_{\mathrm{TV}}= \tfrac12,$$
	i.e.\ for which the value $\frac12$ actually is attained (cf.\ the following proposition, which
	slightly improves the result from Theorem \ref{upperboundthm}).	
\end{remark}

\begin{proposition}\label{strict}
	Consider a Markov chain in discrete time with countable state space $S$ and two states $x,y\in S$. If two copies,
	$\mathbf{X}=(X_t)_{t\in\N_0}$ and $\mathbf{Y}=(Y_t)_{t\in\N_0}$, started in $x$ and $y$ respectively, can be coupled to meet almost surely in finite time, it follows that
	$$\lim_{n\to\infty}\lVert P^n(x,.)-P^n(y,.)\rVert_{\mathrm{TV}}< \tfrac12.$$
\end{proposition}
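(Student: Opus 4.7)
My plan is to refine the proof of Proposition \ref{coupl} by exploiting a feature specific to discrete time: the martingale $M_t=P^{n-t}(X_t,A)$ must jump strictly below $\tfrac12$ at its first down-crossing time $\tau_x$, so on $B_x^{\,\mathrm{c}}$ one has $M_{\tau_x}<\tfrac12$ \emph{strictly} (rather than only $\leq\tfrac12$, as one is forced to allow in the continuous-time setting of Theorem \ref{generalthm}). Consequently, the estimate $\E[M_{\tau_x};B_x^{\,\mathrm{c}}]\leq\tfrac12\Prob(B_x^{\,\mathrm{c}})$ appearing in \eqref{M_t} is a strict inequality whenever $\Prob(B_x^{\,\mathrm{c}})>0$; running the same argument with $A$ replaced by $S\setminus A$ and the roles of $x$ and $y$ swapped yields, by the same token, a strict version of the corresponding $y$-side bound.

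I would then proceed by contradiction: assume that $\lim_n\lVert P^n(x,.)-P^n(y,.)\rVert_{\mathrm{TV}}=\tfrac12$. Since the left-hand sequence is non-increasing, $\lVert P^n(x,.)-P^n(y,.)\rVert_{\mathrm{TV}}\geq\tfrac12$ for every $n$. For each $n$ pick a set $A_n$ attaining this supremum (which is possible in a countable state space, e.g.\ $A_n=\{z:P^n(x,z)\geq P^n(y,z)\}$), and apply the strictified argument at $A_n$. Writing $\eta_n\geq 0$ for the total slack accumulated from both strict inequalities, this yields
\[
\lVert P^n(x,.)-P^n(y,.)\rVert_{\mathrm{TV}}=1-\tfrac12\bigl(\Prob(B_x^{\,\mathrm{c}})+\Prob(B_y^{\,\mathrm{c}})\bigr)-\eta_n.
\]
Combining with $\lVert P^n(x,.)-P^n(y,.)\rVert_{\mathrm{TV}}\geq\tfrac12$, the inequality $\Prob(B_x^{\,\mathrm{c}})+\Prob(B_y^{\,\mathrm{c}})\geq\Prob(\tau\leq n)$, and $\Prob(\tau\leq n)\to 1$ forces $\eta_n\to 0$.

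The main obstacle is turning the pointwise strict inequality $M_{\tau_x}<\tfrac12$---which alone yields only $\eta_n>0$ for each individual $n$---into a genuine contradiction in the limit, because the discrete values $P^{n-t}(z,A_n)$ below $\tfrac12$ may in principle cluster arbitrarily close to $\tfrac12$ as $n$ and $A_n$ vary. My plan for closing this gap is to use the identity $M_\tau=N_\tau$ on the meeting event: together with $\eta_n\to 0$ it should pin the conditional law of $M_\tau$ on $\{\tau\leq n\}$ close to $\tfrac12$. A subsequential tightness argument on the joint law of $(\tau,X_\tau)$, followed by an application of the optimal-separation duality of Theorem \ref{thm:strongduality} to the extracted limit configuration, should then produce the desired contradiction with the hypothesis $\Prob(\tau<\infty)=1$.
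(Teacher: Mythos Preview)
Your opening observation is exactly right and is the heart of the matter: in discrete time the down-crossing value $M_{\tau_x}$ is \emph{strictly} below $\tfrac12$ on $B_x^{\,\mathrm{c}}$, so the estimate \eqref{M_t} is strict whenever $\Prob(B_x^{\,\mathrm{c}})>0$. The paper uses precisely this. However, your plan for passing to the limit has a genuine gap.

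The difficulty you yourself flag is real and is not resolved by the route you sketch. The slack $\eta_n$ depends on how far the discrete values $P^{n-t}(z,A_n)$ lying below $\tfrac12$ actually fall, and as $n$ and $A_n$ vary there is nothing preventing these from clustering at $\tfrac12$; the pointwise strictness $\eta_n>0$ need not survive the limit. Your proposed remedy---tightness of $(\tau,X_\tau)$ followed by an appeal to Theorem~\ref{thm:strongduality}---does not work as stated: tightness of $X_\tau$ is not available on a merely countable state space, and Theorem~\ref{thm:strongduality} is a statement about \emph{finite} chains over a \emph{finite} horizon, so there is no ``limit configuration'' to which it applies. Even granting some form of tightness, it is unclear what object would emerge in the limit or why the separation duality would say anything about it.

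The paper sidesteps this limiting difficulty entirely by taking the limit \emph{before} running the martingale argument rather than after. Concretely, with $A_n=\{s:P^n(x,s)>P^n(y,s)\}$ and $f_n(t,s):=P^{n-t}(s,A_n)$, one uses that $\N_0\times S$ is countable to extract (by a diagonal argument) a subsequence along which $f_n\to f$ pointwise, with $f:\N_0\times S\to[0,1]$. Dominated convergence shows $\big(f(t,X_t)\big)_{t\ge 0}$ is still a bounded martingale, and $f(0,x)-f(0,y)$ equals the limiting total variation distance \emph{exactly}. Now run your strictified optional-stopping argument once, for this single $f$: on $\{\tau_x<\infty\}$ one has $f(\tau_x,X_{\tau_x})<\tfrac12$ strictly, so if $\Prob(B_x^{\,\mathrm{c}})>0$ the bound is strictly below $\tfrac12$ with no limit to take. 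The residual case $\Prob(B_x^{\,\mathrm{c}})=0$ (and its symmetric counterpart) is then handled separately by observing that the a.s.\ meeting forces both martingales to hit the level $\tfrac12$, whence optional stopping gives $f(0,x)=f(0,y)=\tfrac12$ and the limiting total variation is in fact $0$. This compactness-first manoeuvre is the missing idea in your proposal.
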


\begin{proof}
	As a matter of fact, we can alter the proof of Proposition \ref{coupl} to derive the above statement:
	For the reasoning there to work, we need a function $f: \N_0\times S\to [0,1]$, replacing
	the martingales $M_t$ and $N_t$, with the following two properties:
	\begin{enumerate}[(i)]
		\item $\big(f(t,X_t)\big)_{t\in\N_0}$ is a martingale with respect to the natural filtration of
		$\mathbf{X}$, and likewise for $\mathbf{Y}$.
		\item $f(0,x)-f(0,y)=\lim_{n\to\infty}\limits\lVert P^n(x,.)-P^n(y,.)\rVert_{\mathrm{TV}}$.
	\end{enumerate}
	In order to compile such a function, let us define the sets $A_n\subseteq S, n\in\N_0,$ via
	$$A_n:=\{s\in S;\ P^n(x,s)>P^n(y,s)\},$$
	which implies $\lVert P^n(x,.)-P^n(y,.)\rVert_{\mathrm{TV}}=P^n(x,A_n)-P^n(y,A_n)$, and further
	$f_n(t,s):=P^{n-t}(s,A_n)$ for all $t\leq n$. Finally, choose $f$ to be the limit of a pointwise converging
	subsequence of the uniformly bounded sequence of functions $(f_{n})_{n\in\N_0}$.
	
	Then (ii) is immediate and since for all $n\in\N_0$, $f_n(t,X_t)$ is a martingale for $0\leq t\leq n$, bounded by
	0 and 1 from below and above respectively, the conditional dominated convergence theorem ensures that $f(t,X_t)$ inherits
	these properties.
	
	Analogously to the the proof of Proposition \ref{coupl}, let us define
	$$B_x:=\{f(t,X_t)\geq \tfrac12\text{ for all }t\in\N_0\}\quad\text{and}\quad
	B_y:=\{f(t,Y_t)< \tfrac12\text{ for all }t\in\N_0\}$$
	as well as
	$$\tau_x := \inf\{t\in\N_0;\ f(t,X_t) < \tfrac{1}{2}\}\quad\text{and}\quad
	\tau_y := \inf\{t\in\N_0;\ f(t,Y_t) \geq \tfrac{1}{2}\}.$$
	Note that the almost sure limit of $f(t,X_t)$ as $t\to\infty$ exists, according to Doob's martingale
	convergence theorem, which implies that $f(\tau_x,X_{\tau_x})$ is well defined even on $B_x=\{\tau_x=\infty\}$.
	Further note that $B_x^{\,\text{c}}\cup B_y^{\,\text{c}}$ is an almost sure event, due to the fact that
	$\mathbf{X}$ and $\mathbf{Y}$ meet in finite time with probability 1.
	
	\vspace{1em}\noindent
	If $\mathbb{P}(B_x^{\,\text{c}})>0$, we get the strict inequality from \eqref{M_t} as
	$f(\tau_x,X_{\tau_x})<\tfrac12$ on $B_x^{\,\text{c}}$:
	\begin{align*}\lim_{n\to\infty}\lVert P^n(x,.)-P^n(y,.)\rVert_{\mathrm{TV}}&=f(0,x)-f(0,y)\\
	&=\mathbb{E}\,f(\tau_x,X_{\tau_x})-\mathbb{E}\,f(\tau_y,Y_{\tau_y})\\
	&<1-\tfrac12\,\left( \mathbb{P}(B_x^{\,\text{c}}) + \mathbb{P}(B_y^{\,\text{c}})\right)\\
	&=\tfrac12.
	\end{align*}
	
	If both $B_x^{\,\text{c}}$ and $\{f(t,Y_t)> \tfrac12\text{ for some }t\in\N_0\}$ have probability
	0, the fact that $\mathbf{X}$ and $\mathbf{Y}$ meet a.s.\ forces the events
	$\{f(t,X_t)=\tfrac12\text{ for some }t\in\N_0\}$ and $\{f(t,Y_t)=\tfrac12\text{ for some }t\in\N_0\big\}$
	to have probability 1. Changing the stopping times to
	$$\tau_x := \inf\{t\in\N_0;\ f(t,X_t) = \tfrac{1}{2}\}\quad\text{and}\quad
	\tau_y := \inf\{t\in\N_0;\ f(t,Y_t) = \tfrac{1}{2}\}$$ gives
	$$\lim_{n\to\infty}\lVert P^n(x,.)-P^n(y,.)\rVert_{\mathrm{TV}}
	=\mathbb{E}\,f(\tau_x,X_{\tau_x})-\mathbb{E}\,f(\tau_y,Y_{\tau_y})=0.$$
	Hence by symmetry it is safe to assume $\mathbb{P}(B_x^{\,\text{c}})>0$, which verifies the claim.
\end{proof}

\section*{Acknowledgements}
First of all, we want to thank Jeff Steif for bringing this special kind of coupling to our attention.
Then, we want to give thanks to him, Olle H\"aggstr\"om, Peter Hegarty and the anonymous referees for their valuable comments on earlier drafts.



\vspace{0.5cm}
\makebox[\textwidth][c]{
\begin{minipage}[t]{1.2\textwidth}
\begingroup
	\begin{minipage}[t]{0.5\textwidth}
	{\sc \small Timo Hirscher\\
		Department of Mathematical Sciences,\\
		Chalmers University of Technology\\
		and University of Gothenburg,\\
		412 96 Gothenburg, Sweden.}\\
	hirscher@chalmers.se
	\end{minipage}
	\hfill
	\begin{minipage}[t]{0.5\textwidth}
	{\sc \small Anders Martinsson\\
   Department of Mathematical Sciences,\\
   Chalmers University of Technology\\
   and University of Gothenburg,\\
   412 96 Gothenburg, Sweden.}\\
   andemar@chalmers.se\\
	\end{minipage}
	\endgroup
\end{minipage}}

\end{document}